\newcommand{\dst}{\displaystyle}
\newcommand{\refe}[1]{(\ref{#1})}
\newtheorem{theorem}{Theorem}[section]
\newtheorem{lemma}[theorem]{Lemma}
\theoremstyle{definition}
\newtheorem{corollary}[theorem]{Corollary}
\theoremstyle{remark}
\newtheorem{remark}[theorem]{Remark}
\numberwithin{equation}{section}
\newcommand{\abs}[1]{\left\vert#1\right\vert}
\newcommand{\ima}{\mathrm{i}}
\begin{document}

\title{On new rational approximants to $\zeta(3)$}
\author{J. Arves\'u\thanks{The research of J. Arves\'u was
partially supported by the research grant MTM2009-12740-C03-01 of the
Ministerio de Educaci\'on y Ciencia of Spain, travel grant from Fundaci\'on Caja Madrid and grant CC-G08-UC3M/ESP-4516 from Comunidad Aut\'o\-no\-ma de Madrid.} and A. Soria-Lorente
 \medskip \\
Department of Mathematics, Universidad Carlos III de Madrid,\\
Avda. de la Universidad, 30, 28911, Legan\'es, Madrid, SPAIN
}
%


\date{\today }


\maketitle
\begin{abstract}
New (infinitely many) rational approximants to $\zeta(3)$ proving its irrationality are given. The recurrence relations for the numerator and denominator of these approximants as well as their continued fraction expansions are obtained. A comparison of our approximants with Ap\'ery's approximants to $\zeta(3)$ is shown.
\end{abstract}

\noindent 2010 Mathematics Subject Classification: Primary 42C05, 11B37, 11J72, 11M06; Secondary 30B70, 11A55, 11J70, 33C47

\noindent Keywords: Irrationality, multiple orthogonal polynomials, orthogonal forms, recurrence relation, simultaneous rational approximation.

\section{Introduction}

The transcendence of the values of Riemann's zeta function
$\zeta(s)=\sum_{n=1}^{\infty}n^{-s}$, $\mbox{Re}\,s>1$, at even integers is a well-known fact. Indeed,
\begin{equation*}
\zeta(2k)=(-1)^{k-1}\frac{(2\pi)^{2k}B_{2k}}{2(2k)!},\quad k=1,2,\ldots,
\end{equation*}
where $B_{2k}\in\mathbb{Q}$ are the Bernoulli numbers. By $\mathbb{Q}$ we denote the set of rational numbers. However, few partial results about
the arithmetic properties of the numbers $\zeta(2k+1)$, $k=1,2,\dots$, have been obtained \cite{apery,ball-rivoal,ri1,ri2,zudilin-0,zudilin-1,zudilin-3}. Among these results we highlight below the pioneer contribution of R. Ap\'{e}ry \cite{apery}, who proved in 1978 the irrationality of $\zeta(3)$ (see \cite{van-der-poorten} for complementary information). Ap\'ery's proof is based on the existence of the recurrence relation  
\begin{equation}
(n+1)^{3}y_{n+1}-(2n+1)(17n^2+17n+5)y_{n}+n^{3}y_{n-1}=0,\quad n=1,2,\dots,
\label{apery-diff-eq-1}
\end{equation}
such that the two independent solutions, namely the sequence of integers $(q_n)_{n\geq0}$ defined by the initial conditions $q_0=1$, $q_1=5$, and 
the sequence of rationals $(p_n)_{n\geq0}$ determined by the initial conditions $p_0=0$, $p_1=6$, form good rational approximants $(p_n/q_n)_{n\geq0}$ 
to the number $\zeta(3)$. Indeed, this sequence can be expressed as the sequence of convergents of the irregular continued fraction
\begin{equation}
\frac{6\mid }{\mid 5}-\frac{1\mid }{\mid 117}-\frac{
64\mid }{\mid 535}-\cdots -\frac{n^{6}\mid }{\mid 34n^3+51n^2+27n+5}-\cdots  \label{apery_frac}
\end{equation}
Here, the convergence is so fast that reveals the irrationality of $\zeta(3)$, since  the inclusions $q_n,p_{n}l_{n}^{3}\in\mathbb{Z}$ (the set of integers numbers), where $l_{n}$ denotes the least common multiple of $\{1,2,\dots,n\}$, combined with the prime number theorem, $l_n=O\left(e^{(1+\epsilon)n}\right)$ for any $\epsilon>0$, and Poincar\'e's theorem \cite{perron,poincare} yield for the error-term sequence $r_n=q_{n}\zeta(3)-p_{n}$, the estimation 
\begin{equation*}
\limsup_{n\to\infty}\sqrt[n]{l_{n}^{3}|r_n|}\leq e^{3}(\sqrt{2}-1)^{4}<1.
\label{apery-estimation}
\end{equation*}
Moreover, for any constant $\mu>1+(\log(\sqrt{2}+1)^4+3)/(\log(\sqrt{2}+1)^4-3)\approx13.417820\ldots$
the inequality $\abs{\zeta(3)-p/q}<q^{-\mu}$ has only a finite number of solutions in $\mathbb{Z}$. Although, the best known 
irrationality measure for $\zeta(3)$ ($\mu\approx5.513891\ldots$) was given more than twenty years later \cite{rin-viola}.

After Ap\'ery's remarkable achievement some variants of his proof appeared. In particular, \cite{beuker-1} and \cite{sorokin-1} consider triple integrals 
for the estimation of error-term sequence $r_n$ of Diophantine approximations to $\zeta(3)$, and in \cite{nesterenko} were used the following complex countour integral 
\begin{equation*}
q_{n}\zeta \left( 3\right) -p_{n}=\frac{1}{2\pi \ima}\int_{1/2-\ima\infty
}^{1/2+\ima\infty }R_{1,n}(\nu -1) \left( \frac{\pi }{\sin \pi \nu }
\right) ^{2}d\nu ,
\end{equation*}
for the same purpose, where
\begin{equation}
R_{1,n}(\nu) =\frac{(-\nu) _{n}^{2}}{(\nu+1)
_{n+1}^{2}},\quad  n\in\mathbb{N}.\label{RationalFNest}
\end{equation}
By $\mathbb{N}$ we denote the set of all positive integers and $(\nu) _{n}=\nu(\nu+1) \cdots (\nu+n-1)$, $(\nu)_{0}=1$,
is the Pochhammer symbol.

In \cite{arvesu} Ap\'ery's proof and all the aforementioned variants \cite{beuker-1,nesterenko,sorokin-1} were put in the same context using a straightforward 
technique for calculation of Wronskian determinant for sequences $(p_n)_{n\geq0}$ and $(q_n)_{n\geq0}$ based on orthogonal forms, which allow to deduce 
equation \refe{apery-diff-eq-1}. Moreover, both type of integrals involved in the estimation of the error-term sequences are linked (see also \cite{nesterenko1}).

Despite the time that has passed from Ap\'ery's discovery only few sequences of rational approximants different from Ap\'ery's one, which prove the irrationality 
of $\zeta(3)$ are known \cite{hata,nesterenko-2,rin-viola}.

In this paper we give in Section \ref{section2} new (infinitely many) rational approximants to $\zeta(3)$ leading to its irrationality. 
More precisely, we obtain a set of rational approximants classified in twelve types $p_{n}^{(i,j)}/q_{n}^{(i,j)}$ $(1\leq i\leq 4,\,\,1\leq j\leq 3)$ 
 depending on certain parameters, which allow to generate infinitely many approximants where the error-term sequence $r_{n}^{(i,j)}=q_{n}^{(i,j)}\zeta \left( 3\right) -p_{n}^{(i,j)}$ is given by
\begin{equation*}
\dst r_{n}^{(i,j)}=
\frac{1}{2\pi \ima}\int_{1/2-\ima\infty }^{1/2+\ima\infty }R_{1,n}^{(i,j)}\left( \nu
-1\right) \left( \frac{\pi }{\sin \pi \nu }\right) ^{2}d\nu,
\end{equation*}
and functions $R_{1,n}^{(i,j)}(\nu)$ are modifications of \refe{RationalFNest}.
In Section \ref{rapp} we link each of the above rational approximants with a corresponding simultaneous rational approximation problem near infinity. 
As a result in Section \ref{aperyRR} we compute the Wronskian determinant for sequences 
$\left(p_n^{(i,j)}\right)_{n\geq0}$ and $\left(q_n^{(i,j)}\right)_{n\geq0}$ and from it we deduce in a straightforward manner the corresponding recurrence relations for these approximants as well as their continued fraction expansions. Lastly, several comparisons between the aforementioned rational approximants and Ap\'ery's approximants are shown.

\section{Rational approximants to $\protect\zeta \left( 3\right) $\label{section2}}

Define the following sequences of rational functions
\begin{equation*}
\left( R_{1,n}^{(i,j)}(t) \right)_{n\geq0},\quad i=1,2,3,4,\quad j=1,2,3,
\end{equation*}
where
\begin{equation}
R_{1,n}^{(i,j)}(t) =R_{1,n}(t) \theta^{(i,j)}(t),
\label{RationalFi,j}
\end{equation}
and
\begin{equation}
\theta^{(i,j)}(t)=\dfrac{[\delta_{1,j}(t+n+\rho)+\delta_{2,j}(t+\vartheta n+1)+\delta_{3,j}(\upsilon t-\chi n-\psi)]^{\delta _{i,2}+1}}{(t-n+1)^{2-\delta_{i,1}}(t+n+1)^{-\delta_{i,3}}},\label{Perturbs}
\end{equation}
with $\rho\in\mathbb{N}\setminus\{1\}$, $\vartheta \in \mathbb{Z}\backslash
\left\{-1,0,1\right\}$, $\upsilon,\chi\in \mathbb{N}$ $(\chi \geq \upsilon)$, $\psi\in\mathbb{N}\cup\{0\}$. By $\delta_{i,j}$ we denotes the Kronecker delta function. 

The partial fraction expansion of \refe{RationalFi,j} is given by
\begin{equation}
R_{1,n}^{(i,j)}(t) =\ \sum_{k=0}^{n}\frac{
a_{k,n}^{(i,j)}}{t+k+1}-\sum_{k=0}^{n-\delta _{i,3}}\frac{
b_{k,n-\delta _{i,3}}^{(i,j)}}{\left( t+k+1\right) ^{2}},
\label{R1_simple}
\end{equation}
with coefficients
\begin{eqnarray}
b_{k,n-\delta _{i,3}}^{(i,j)} &=&-\lim_{t\to-(k+1)}\left( t+k+1\right)
^{2}R_{1,n}^{(i,j)}(t),\quad
k=0,\ldots ,n-\delta _{i,3},  \notag \\
&=&-\binom{n+k}{k}^{2}\binom{n}{k}^{2}\theta ^{(i,j)}\left(
-k-1\right),  \label{bk,n}
\end{eqnarray}
and
\begin{eqnarray}
a_{k,n}^{(i,j)} &=&\mathop{\mathrm{Res}}_{t=-k-1}{R_{1,n}^{(i,j)}(t)},\quad
k=0,\ldots ,n-\delta _{i,3},  \notag \\
&=&2b_{k,n-\delta _{i,3}}^{(i,j)}\left[
H_{n+k-1}-2H_{k}+H_{n-k-\delta _{i,3}}-\varphi ^{(i,j)}\left(
-k-1\right) \right],  \label{ak,n}
\end{eqnarray}
where
\begin{equation*}
\varphi ^{(i,j)}(t) =\frac{d}{dt}\log \theta^{(i,j)}(t),
\end{equation*}
and $H^{(r)}_k$ denotes the Harmonic Number $k$ of order $r$ ($H^{(1)}_k=H_k$ and $H_0=0$). Moreover, the following particular situation yield
\begin{eqnarray}
a_{n,n}^{\left( 3,j\right) } &=&\lim_{t\to-(n+1)}\left( t+n+1\right) R_{1,n}^{\left(
3,j\right) }(t)\notag \\
&=&\frac{\left( -1\right) ^{\delta _{3,j}}}{n^{2-\delta _{2,j}}}\binom{2n-1}{
n-1}^{2}\sigma _{3,j},\quad j=1,2,3,  \label{an,n}
\end{eqnarray}
where $\sigma _{3,1}=\rho -1$, $\sigma _{3,2}=\vartheta -1$, $\sigma _{3,3}=\upsilon(n+1) +\chi n+\psi$.
Furthermore, from the above explicit expressions of coefficients \refe{bk,n}-\refe{an,n} the following inclusions 
\begin{equation}
n^{\omega _{i,j}}b_{k,n-\delta
_{i,3}}^{(i,j)},n^{\omega
_{i,j}}l_{n}a_{k,n}^{(i,j)}\in \mathbb{Z},\quad k=0,\ldots,n,
\label{inclu}
\end{equation}
hold.

Denoting $R_{2,n}^{\left( i,j\right)
}(t) =\frac{d}{dt}R_{1,n}^{\left(i,j\right)}\left(t\right)$ one gets
\begin{equation}
R_{2,n}^{(i,j)}(t) =2R_{1,n}^{\left( i,j\right)
}(t) \left[ \sum_{k=0}^{n-2}\frac{1}{t-k}-\sum_{k=1}^{n-\delta
_{i,3}+1}\frac{1}{t+k}+\varphi ^{(i,j)}(t) \right].
\label{R_{2,n}}
\end{equation}

For the main result of this section we will need the following Lemmas.
\begin{lemma}
The following relation is valid
\begin{equation}
\sum_{t=0}^{\infty}R_{2,n}^{(i,j)}\left(
t\right) =q_{n}^{(i,j)}\zeta \left( 3\right) -p_{n}^{\left(
i,j\right) },\quad n=0,1,\dots,\label{Remainderi,j}
\end{equation}
where
\begin{equation}
q_{n}^{(i,j)}=\ 2\sum_{k=0}^{n-\delta _{i,3}}b_{k,n-\delta
_{i,3}}^{(i,j)},\quad  p_{n}^{\left( i,j\right)
}=2\sum_{k=1}^{n-\delta _{i,3}}b_{k,n-\delta _{i,3}}^{\left( i,j\right)
}H_{k}^{\left( 3\right) }-\sum_{k=1}^{n}a_{k,n}^{\left( i,j\right)
}H_{k}^{\left( 2\right) }.  \label{RApproxs}
\end{equation}
Coefficients $a_{k,n}^{(i,j)}$ and $b_{k,n-\delta _{i,3}}^{\left(
i,j\right) }$ are given in \refe{bk,n}-\refe{an,n}. Moreover, 
$n^{\omega _{i,j}}q_{n}^{(i,j)}\in\mathbb{Z}$, and $n^{\omega_{i,j}}l_{n}^{3}
p_{n}^{(i,j)}\in\mathbb{Z}$, where $l_{n}$ denotes the least common multiple of $\left\{1,2,\ldots,n\right\}$, and
\begin{equation*}
\left( \omega _{i,j}\right) _{4,3}=
\begin{pmatrix}
1 & 0 & 1 \\ 
2 & 0 & 2 \\ 
1 & 0 & 1 \\ 
2 & 1 & 2
\end{pmatrix}.
\end{equation*}
\end{lemma}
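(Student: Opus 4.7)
The plan is to differentiate the partial fraction decomposition \refe{R1_simple} termwise, sum over the nonnegative integers, and identify the coefficients of $\zeta(3)$ and $1$. Differentiating \refe{R1_simple} gives
\begin{equation*}
R_{2,n}^{(i,j)}(t)=-\sum_{k=0}^{n}\frac{a_{k,n}^{(i,j)}}{(t+k+1)^{2}}+2\sum_{k=0}^{n-\delta_{i,3}}\frac{b_{k,n-\delta_{i,3}}^{(i,j)}}{(t+k+1)^{3}},
\end{equation*}
and since $R_{1,n}^{(i,j)}(t)=O(|t|^{-2})$ as $|t|\to\infty$ (a degree count using \refe{Perturbs} shows that $\theta^{(i,j)}$ has degree $0$ for $i=1,2,3$ and degree $-1$ for $i=4$, while $R_{1,n}$ has degree $-2$), the series $\sum_{t\geq 0}R_{2,n}^{(i,j)}(t)$ converges absolutely. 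Using the Hurwitz-type identities $\sum_{t\geq 0}(t+k+1)^{-s}=\zeta(s)-H_{k}^{(s)}$ for $s=2,3$, termwise summation yields
\begin{equation*}
\sum_{t=0}^{\infty}R_{2,n}^{(i,j)}(t)=\Bigl(2\sum_{k=0}^{n-\delta_{i,3}}b_{k,n-\delta_{i,3}}^{(i,j)}\Bigr)\zeta(3)-\Bigl(\sum_{k=0}^{n}a_{k,n}^{(i,j)}\Bigr)\zeta(2)+\sum_{k=1}^{n}a_{k,n}^{(i,j)}H_{k}^{(2)}-2\sum_{k=1}^{n-\delta_{i,3}}b_{k,n-\delta_{i,3}}^{(i,j)}H_{k}^{(3)},
\end{equation*}
where $H_{0}^{(r)}=0$ removes the $k=0$ term from the harmonic sums.

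The key step is to verify that the coefficient of $\zeta(2)$ vanishes, i.e.\ $\sum_{k=0}^{n}a_{k,n}^{(i,j)}=0$. This is where I expect the main (conceptual) obstacle: one must check that $R_{1,n}^{(i,j)}$ is a \emph{proper} rational function of degree at most $-2$, so that the sum of its residues at finite poles equals zero. The degree count above, together with the observation that the pole of $\theta^{(i,j)}$ at $t=n-1$ is completely absorbed by the double zero of $(-t)_{n}^{2}$ in $R_{1,n}(t)$ at that point, ensures the poles of $R_{1,n}^{(i,j)}$ lie only at $t=-1,\dots,-(n+1)$, with multiplicities consistent with \refe{R1_simple}. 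The vanishing of $\sum_{k}a_{k,n}^{(i,j)}$ is then a straightforward consequence of $\oint R_{1,n}^{(i,j)}(t)\,dt=0$ over a large circle. Combining this with the previous display establishes \refe{Remainderi,j}--\refe{RApproxs}.

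For the arithmetic claim, $n^{\omega_{i,j}}q_{n}^{(i,j)}\in\mathbb{Z}$ follows directly from the inclusion $n^{\omega_{i,j}}b_{k,n-\delta_{i,3}}^{(i,j)}\in\mathbb{Z}$ stated in \refe{inclu} and the formula for $q_{n}^{(i,j)}$ in \refe{RApproxs}. For the rational part, I would invoke the classical facts $l_{n}^{2}H_{k}^{(2)}\in\mathbb{Z}$ and $l_{n}^{3}H_{k}^{(3)}\in\mathbb{Z}$ for $1\leq k\leq n$, together with the inclusions $n^{\omega_{i,j}}b_{k,n-\delta_{i,3}}^{(i,j)}\in\mathbb{Z}$ and $n^{\omega_{i,j}}l_{n}a_{k,n}^{(i,j)}\in\mathbb{Z}$. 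Writing
\begin{equation*}
n^{\omega_{i,j}}l_{n}^{3}\,p_{n}^{(i,j)}=2\sum_{k=1}^{n-\delta_{i,3}}\bigl(n^{\omega_{i,j}}b_{k,n-\delta_{i,3}}^{(i,j)}\bigr)\bigl(l_{n}^{3}H_{k}^{(3)}\bigr)-\sum_{k=1}^{n}\bigl(n^{\omega_{i,j}}l_{n}a_{k,n}^{(i,j)}\bigr)\bigl(l_{n}^{2}H_{k}^{(2)}\bigr),
\end{equation*}
each summand is an integer, so the total lies in $\mathbb{Z}$. The specific values of the matrix $(\omega_{i,j})$ are inherited from \refe{inclu}; they come from tracking the factors of $n$ introduced by the denominator $(t-n+1)^{2-\delta_{i,1}}$ evaluated at $t=-k-1$ in \refe{bk,n} and by the singular case $k=n$ handled in \refe{an,n}.
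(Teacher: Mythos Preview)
Your proof is correct and follows essentially the same route as the paper's: differentiate the partial fraction expansion \refe{R1_simple}, sum termwise using $\sum_{t\ge 0}(t+k+1)^{-s}=\zeta(s)-H_k^{(s)}$, kill the $\zeta(2)$ coefficient via $\sum_k a_{k,n}^{(i,j)}=0$ (which the paper also deduces from the decay $R_{1,n}^{(i,j)}(t)=\mathcal{O}(t^{-2})$ and the residue theorem), and then read off the arithmetic from \refe{inclu} together with $l_n^{s}H_k^{(s)}\in\mathbb{Z}$. Your additional remarks on the degree count of $\theta^{(i,j)}$ and the cancellation of its pole at $t=n-1$ by the double zero of $(-t)_n^2$ make explicit what the paper leaves implicit, but the overall argument is the same.
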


\begin{proof}
Since $R_{1,n}^{(i,j)}(t) =\mathcal{O}\left(
t^{-2-\delta _{i,3}}\right) $ as $t\rightarrow \infty $, we have
\begin{equation}
\sum_{k=0}^{n}a_{k,n}^{(i,j)}=\sum_{k=0}^{n}
\mathop{\mathrm{Res}}_{t=-k-1}{R_{1,n}^{(i,j)}(t)}
=-\mathop{\mathrm{Res}}_{t=\infty
}{R_{1,n}^{(i,j)}(t)} =0.
\label{cond_A1}
\end{equation}

Hence, denoting $r_{n}^{(i,j)}=\sum_{t=0}^{\infty}R_{2,n}^{(i,j)}\left(
t\right)$ we get
\begin{eqnarray*}
r_{n}^{(i,j)} &=&\sum_{t=0}^{\infty}\left(\sum_{k=0}^{n-\delta _{i,3}}
\frac{2b_{k,n-\delta _{i,3}}^{(i,j)}}{\left( t+k+1\right) ^{3}}
-\sum_{k=0}^{n}\frac{a_{k,n}^{(i,j)}}{\left(
t+k+1\right) ^{2}}\right) \\
&=&2\sum_{k=0}^{n-\delta _{i,3}}\sum_{l=k+1}^{\infty}\frac{b_{k,n-\delta
_{i,3}}^{(i,j)}}{l^{3}}-\sum_{k=0}^{n}\,\sum_{l=k+1}^{\infty}\frac{
a_{k,n}^{(i,j)}}{l^{2}} \\
&=&2\sum_{k=0}^{n-\delta _{i,3}}b_{k,n-\delta _{i,3}}^{\left( i,j\right)
}\left( \sum_{l=1}^{\infty}\frac{1}{l^{3}}-\sum_{l=1}^{k}\frac{1}{l^{3}}\right)
-\sum_{k=0}^{n}a_{k,n}^{(i,j)}\left( \sum_{l=1}^{\infty}\frac{1}{l^{2}}-\sum_{l=1}^{k}\frac{1}{l^{2}}\right) \\
&=&2\sum_{k=0}^{n-\delta _{i,3}}b_{k,n-\delta _{i,3}}^{\left( i,j\right)
}\sum_{l=1}^{\infty}\frac{1}{l^{3}}-2\sum_{k=1}^{n-\delta _{i,3}}b_{k,n-\delta
_{i,3}}^{(i,j)}\sum_{l=1}^{k}\frac{1}{l^{3}}
+\sum_{k=1}^{n}a_{k,n}^{(i,j)}\sum_{l=1}^{k}\frac{1}{l^{2}},
\end{eqnarray*}
which coincides with \refe{Remainderi,j} by considering the expressions given in \refe{RApproxs}. Finally, using the inclusions \refe{inclu} and the fact that
$l_{n}^{s}\sum_{l=1}^{k}\frac{1}{l^{s}}\in \mathbb{Z}$, $k=0,1,\ldots,n$, $s\in \mathbb{Z}^{+}$, one gets $n^{\omega _{i,j}}q_{n}^{(i,j)}\in \mathbb{Z}$
and $n^{\omega _{i,j}}l_{n}^{3}p_{n}^{(i,j)}\in \mathbb{Z}$, which completes the proof.\end{proof}

\begin{lemma}
The following relation for \refe{Remainderi,j} is
valid
\begin{equation}
r_{n}^{(i,j)}=\frac{1}{2\pi \ima}\int_{-1/2-\ima\infty
}^{-1/2+\ima\infty }R_{1,n}^{(i,j)}\left( \nu \right) \left( \frac{
\pi }{\sin \pi \nu }\right) ^{2}d\nu=\sum_{t=0}^{\infty}R_{2,n}^{\left(
i,j\right) }(t).  \label{IRemainderi,j}
\end{equation}
\end{lemma}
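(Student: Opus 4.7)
The second equality in \refe{IRemainderi,j} is already the content of the previous lemma, so the new information to establish is the contour-integral representation of $r_n^{(i,j)}$. My plan is to deform the vertical line $\mathrm{Re}\,\nu=-1/2$ to the right through the sequence of rectangular contours with vertices $-1/2\pm\ima T$ and $N+1/2\pm\ima T$, and then apply the residue theorem, collecting every singularity of the integrand in the half-plane $\mathrm{Re}\,\nu>-1/2$.

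Two decay estimates are needed for the deformation. Along any vertical line $|\pi/\sin\pi\nu|^{2}$ decays like $e^{-2\pi|\mathrm{Im}\,\nu|}$, so the horizontal edges of the rectangle vanish as $T\to\infty$. On the right edge $\mathrm{Re}\,\nu=N+1/2$ the identity $|\sin\pi(N+1/2+\ima t)|^{2}=\cosh^{2}\pi t$ gives $|\pi/\sin\pi\nu|^{2}\le\pi^{2}/\cosh^{2}\pi t$; combined with the bound $R_{1,n}^{(i,j)}(\nu)=\mathcal{O}(|\nu|^{-2-\delta_{i,3}})$ read off from \refe{RationalFi,j}--\refe{Perturbs}, the right edge contributes $\mathcal{O}(N^{-2})$ and vanishes as $N\to\infty$.

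Inside each rectangle the integrand has no singularities coming from $R_{1,n}^{(i,j)}$ (whose poles all lie in $\{-1,-2,\dots,-n-1\}$); only the double poles of $(\pi/\sin\pi\nu)^{2}$ at the non-negative integers $m=0,1,\dots,N$ remain. At each such $m$ the Laurent expansion
\[
\left(\frac{\pi}{\sin\pi\nu}\right)^{2}=\frac{1}{(\nu-m)^{2}}+\frac{\pi^{2}}{3}+\mathcal{O}\bigl((\nu-m)^{2}\bigr),
\]
together with the Taylor expansion $R_{1,n}^{(i,j)}(\nu)=R_{1,n}^{(i,j)}(m)+R_{2,n}^{(i,j)}(m)(\nu-m)+\cdots$, gives
\[
\mathop{\mathrm{Res}}_{\nu=m}\Bigl[R_{1,n}^{(i,j)}(\nu)\bigl(\pi/\sin\pi\nu\bigr)^{2}\Bigr]=R_{2,n}^{(i,j)}(m),
\]
and this identity remains valid (with both sides vanishing) at those indices $m\in\{0,\dots,n-1\}$ where $R_{1,n}^{(i,j)}$ has a zero of order $\ge 2$. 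Summing and applying the residue theorem with the orientation induced by the upward parametrization of the original contour identifies the integral with $\sum_{m=0}^{\infty}R_{2,n}^{(i,j)}(m)$, which by \refe{Remainderi,j} is $r_n^{(i,j)}$.

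The main obstacle I expect is the uniform decay bound on the right edge: one has to work on the half-integer abscissae $N+1/2$ so that $|\sin\pi\nu|$ stays bounded below there, and must interchange the two limits $T\to\infty$ and $N\to\infty$ in the correct order. Once these real-analytic estimates are in place, the residue computation and the matching with the previous lemma are both immediate.
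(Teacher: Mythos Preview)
Your proof is correct and follows essentially the same route as the paper: close the vertical line to the right, show the added contour contributes nothing as it recedes, and identify the residues of $R_{1,n}^{(i,j)}(\nu)(\pi/\sin\pi\nu)^{2}$ at the nonnegative integers with $R_{2,n}^{(i,j)}(m)$ via the same Taylor/Laurent expansion. The only cosmetic difference is that the paper uses a single large semicircle centered at $-1/2$ rather than your rectangles with half-integer right edge; your choice makes the boundedness of $(\sin\pi\nu)^{-1}$ on the far contour explicit, but the underlying argument is identical.
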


\begin{proof} For the evaluation of the integral \refe{IRemainderi,j} one expresses it as a limit of contour integrals
along the contour $\Omega _{n,i,j}$ that goes along the imaginary line from $
-1/2+\ima L_{n,i,j}$ to $-1/2-\ima L_{n,i,j}$ and then counterclockwise along a
semicircle centered at $-1/2$ from $-1/2+\ima L_{n,i,j}$ to $-1/2-\ima L_{n,i,j}$,
where the semicircle radius $L_{n,i,j}>n+2$. We have taken $L_{n,i,j}$ to be
greater than $n+2$, so that $n+1$ singularities of the integrand function
are enclosed within the curve. The rational function $R_{1,n}^{\left(
i,j\right) }(t)=\mathcal{O}\left( L_{n,i,j}^{-2}\right) $ on the arc of $
\Omega _{n,i,j}$, while the function $\left( \sin \pi z\right) ^{-1}$ is
bounded. Now, by residue theorem one can compute \refe{IRemainderi,j}. Indeed,
\begin{equation*}
\mathop{\mathrm{Res}}_{t=n}{\left( R_{1,n}^{(i,j)}(z)\left( \frac{\pi }{
\sin \pi z}\right) ^{2}\right)}=R_{2,n}^{(i,j)}(n),\quad
n=0,1,2,\ldots ,
\end{equation*}
which can be easily checked by considering the following expansions at the
integers
\begin{eqnarray*}
R_{1,n}^{(i,j)}(z) &=&R_{1,n}^{\left( i,j\right)
}(n)+R_{2,n}^{(i,j)}(n)\left( z-n\right) +\mathcal{O}\left(
\left( z-n\right) ^{2}\right) , \\
\left( \frac{\pi }{\sin \pi z}\right) ^{2} &=&\frac{1}{\left( z-n\right) ^{2}
}+\mathcal{O}\left( 1\right) .
\end{eqnarray*}
Therefore,
\begin{equation*}
r_{n}^{(i,j)}=\frac{1}{2\pi \ima}\int_{-1/2-\ima\infty
}^{-1/2+\ima\infty }R_{1,n}^{(i,j)}\left( \nu \right) \left( \frac{
\pi }{\sin \pi \nu }\right) ^{2}d\nu =\sum_{t=0}^{\infty}R_{2,n}^{\left(
i,j\right) }(t),
\end{equation*}
holds.\end{proof}

\begin{theorem}
There holds the following asymptotic formula
\begin{equation*}
r_{n}^{(i,j)}=\frac{-\pi ^{3/2}\eta^{(i,j)}}{n^{3/2-\delta _{i,4}}2^{1/4}}
\left( \sqrt{2}-1\right) ^{4n}\left(
1+o\left( 1\right) \right),\,\, \left(\eta^{(i,j)}\right)_{4,3}=
\left(\begin{array}{rrr}
-1 & -\frac{|\vartheta|}{\vartheta} & 1 \\ 
1 & \frac{|\vartheta|}{\vartheta} & 1 \\ 
1 & \frac{|\vartheta|}{\vartheta} & -1 \\ 
1 & 1 & -1
\end{array}\right).
\end{equation*}
\label{theoremRemainderi,j}
\end{theorem}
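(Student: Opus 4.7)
The plan is to evaluate $r_n^{(i,j)}$ by applying the method of steepest descent to the contour integral in \refe{IRemainderi,j}. The key structural observation is that $\theta^{(i,j)}$ is a rational function whose every factor is linear in both $\nu$ and $n$; hence it contributes only subexponential factors to the integrand, and all twelve cases share the same exponential rate. A single saddle point will therefore serve for all $(i,j)$, and the entire $(i,j)$-dependence of the leading asymptotic will be absorbed into the algebraic prefactor.

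First, I would substitute $\nu = nz - 1/2$ so that the contour becomes the line $\operatorname{Re} z = 0$, and apply the reflection formula $\pi/\sin\pi\nu = \Gamma(\nu)\Gamma(1-\nu)$ together with $(-\nu)_n = (-1)^n\Gamma(\nu+1)/\Gamma(\nu-n+1)$ to obtain
\[ R_{1,n}(\nu)\Bigl(\frac{\pi}{\sin\pi\nu}\Bigr)^{2} = \frac{\Gamma(\nu+1)^{4}\,\Gamma(n-\nu)^{2}}{\Gamma(\nu+n+2)^{2}}. \]
Then Stirling's expansion yields the integrand in the form $n^{-2}G(z)\theta^{(i,j)}(nz)e^{n\Phi(z)}\bigl(1+O(1/n)\bigr)$, where $\Phi(z) = 4z\log z + 2(1-z)\log(1-z) - 2(z+1)\log(z+1)$ and $G(z) = (2\pi)^{2}z^{2}/[(1-z)(z+1)^{3}]$ are both independent of $(i,j)$.

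Next, I would locate the saddle: $\Phi'(z)=0$ reduces to $z^{4}+z^{2}-1=0$, and the relevant root $z_{*}$ (together with the correct branch of the logarithm) is singled out by requiring that the original vertical contour can be continuously deformed onto a steepest-descent path through $z_{*}$; for this root one verifies $e^{\Phi(z_{*})} = (\sqrt{2}-1)^{4}$. Gaussian integration along the steepest-descent direction then contributes a factor $\sqrt{2\pi/(n\Phi''(z_{*}))}$, and multiplying by the algebraic prefactor and the outer $n/(2\pi\ima)$ from $d\nu = n\,dz$ yields the base asymptotic
\[ r_{n}^{(i,j)} \sim \frac{G(z_{*})\,\theta^{(i,j)}(nz_{*})}{\sqrt{2\pi\,n^{3}\,\Phi''(z_{*})}}\,(\sqrt{2}-1)^{4n}. \]
Finally, for each $(i,j)$ I would evaluate $\theta^{(i,j)}(nz_{*})$: every factor $at+bn+c$ appearing in \refe{Perturbs} contributes $n(az_{*}+b)+c$ at $t=nz_{*}$, so the Kronecker pattern for $\delta_{i,1},\delta_{i,2},\delta_{i,3}$ determines the multiplicities, the net power of $n$ (giving the exponent $n^{-(3/2-\delta_{i,4})}$ of the stated formula), and the sign matrix $(\eta^{(i,j)})$; the appearance of $|\vartheta|/\vartheta$ in the column $j=2$ reflects the sign of the coefficient of $n$ in the bracket $(t+\vartheta n+1)$, while the constants $\pi^{3/2}$ and $2^{1/4}$ emerge from the explicit values of $G(z_{*})$ and $\Phi''(z_{*})$.

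The main obstacle is the justification of the contour deformation onto the steepest-descent path through $z_{*}$: the integrand is meromorphic with order-two poles at all integers coming from $(\pi/\sin\pi\nu)^{2}$, partially cancelled by the zeros of $R_{1,n}$ at $\nu\in\{0,1,\dots,n-1\}$, and one must verify that the deformation does not pick up unwanted residue contributions. A secondary technical challenge is the precise tracking of the power of $n$ in the case $i=4$, where the factor $(t-n+1)^{-2}$ in $\theta^{(4,j)}$ introduces an extra double pole at $\nu=n-1$; the resulting cancellation pattern is responsible for the anomalous exponent $n^{-1/2}$ (rather than $n^{-3/2}$) appearing in the fourth row of the asymptotic formula.
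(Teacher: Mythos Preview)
Your overall strategy is exactly the paper's: rewrite the contour integral via Stirling, identify a real saddle on the shifted vertical line, and apply Laplace's method, with the $(i,j)$-dependence entering only through the algebraic factor $\theta^{(i,j)}$ evaluated at the saddle. The paper does precisely this (with the scaling $\nu=(n+1)t$ and phase $f(t)=(1-t)\log(1-t)+2t\log t-(1+t)\log(1+t)$, which is half your $\Phi$), so on the level of method there is nothing to distinguish.

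However, your saddle equation is wrong. Differentiating $\Phi(z)=4z\log z+2(1-z)\log(1-z)-2(z+1)\log(z+1)$ gives
\[
\Phi'(z)=4\log z-2\log(1-z)-2\log(1+z)=2\log\frac{z^{2}}{1-z^{2}},
\]
so $\Phi'(z)=0$ is equivalent to $z^{2}=1-z^{2}$, i.e.\ $z_{*}=1/\sqrt{2}$, not to $z^{4}+z^{2}-1=0$. The positive real root of $z^{4}+z^{2}-1=0$ is $z^{2}=(\sqrt{5}-1)/2$, which would give the wrong exponential rate; only $z_{*}=1/\sqrt{2}$ yields $e^{\Phi(z_{*})}=(\sqrt{2}-1)^{4}$. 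This is a slip, not a structural gap, but it must be corrected before anything downstream (the values $\Phi''(z_{*})$, $G(z_{*})$, and hence $\pi^{3/2}2^{-1/4}$) can be checked.

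Your explanation of the $i=4$ anomaly is also slightly off. The denominator factor $(t-n+1)^{2}$ is present for $i=2,3,4$ alike; what singles out $i=4$ is that there is no compensating factor in the numerator (for $i=2$ the bracket is squared, for $i=3$ there is an extra $(t+n+1)$), so $\theta^{(4,j)}(nz_{*})$ is of order $n^{-1}$ rather than $O(1)$. It is this degree count, not an ``extra double pole at $\nu=n-1$'', that produces the shifted power of $n$. Finally, your concern about contour deformation is legitimate but easily settled: the double zeros of $R_{1,n}(\nu-1)$ at $\nu=1,\dots,n$ cancel the poles of $(\pi/\sin\pi\nu)^{2}$ there, so shifting the vertical line from $\operatorname{Re}\nu=1/2$ to $\operatorname{Re}\nu=(n+1)/\sqrt{2}<n+1$ crosses no singularities.
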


\begin{proof} From expression \refe{IRemainderi,j} one writes
\begin{equation}
r_{n}^{(i,j)} =\frac{1}{2\pi \ima}\int_{1/2-\ima\infty }^{1/2+\ima\infty }R_{1,n}\left( \nu
-1\right) \left( \frac{\pi }{\sin \pi \nu }\right) ^{2}\theta ^{\left(
i,j\right) }\left( \nu -1\right) d\nu.
\label{eqrn}
\end{equation}
Now, taking into account the following estimations 
\begin{equation*}
\log \theta ^{(i,j)}\left(tn+t-1\right) \sim 
\begin{cases}
\displaystyle\log \frac{\left( t+1\right) ^{2-\delta _{i,1}-\delta _{i,4}}}{
\left( t-1\right) ^{2-\delta _{i,1}}}-\delta _{i,4}\log \left( n+1\right) , & 
j=1, \\ 
\displaystyle\log \frac{\left( t+1\right) ^{\delta _{i,3}}\left( t+\vartheta
\right) ^{\delta _{i,2}+1}}{\left( t-1\right) ^{2-\delta _{i,1}}}-\delta
_{i,4}\log \left( n+1\right) , & j=2, \\ 
\displaystyle\log \frac{\left( t+1\right) ^{\delta _{i,3}}\left( \upsilon
t-\chi \right) ^{\delta _{i,2}+1}}{\left( t-1\right) ^{2-\delta _{i,1}}}
-\delta _{i,4}\log \left( n+1\right) , & j=3,
\end{cases}
\label{lnPerturs}
\end{equation*}
it is a matter of straightforward computation to see that for $z=(n+1)t$
\begin{multline*}
\log \frac{\left( 1-z\right) _{n}^{2}}{(z) _{n+1}^{2}}\left( 
\frac{\pi }{\sin \pi z}\right) ^{2}\theta ^{(i,j)}\left(
z-1\right)=\log g^{(i,j)}\left(
t\right) +2\left( n+1\right) f(t) \\
-\left( 2+\delta _{i,4}\right) \log \left( n+1\right) +2\log 2\pi +\mathcal{O}
\left( n^{-1}\right) ,
\end{multline*}
where
\begin{align*}
g^{(i,j)}(t) &=
\begin{cases}
\displaystyle\frac{\left( t+1\right) ^{2-\delta _{i,1}-\delta _{i,4}}}{
\left( t-1\right) ^{2-\delta _{i,1}}}g(t) , & j=1, \\ 
\displaystyle\frac{\left( t+1\right) ^{\delta _{i,3}}\left( t+\vartheta
\right) ^{\delta _{i,2}+1}}{\left( t-1\right) ^{2-\delta _{i,1}}}g\left(
t\right) , & j=2, \\ 
\displaystyle\frac{\left( t+1\right) ^{\delta _{i,3}}\left( \upsilon t-\chi
\right) ^{\delta _{i,2}+1}}{\left( t-1\right) ^{2-\delta _{i,1}}}g\left(
t\right) , & j=3,
\end{cases}
\end{align*}
and
\begin{eqnarray*}
g(t) &=&\frac{1+t}{\left( 1-t\right) t^{2}}, \\
f(t) &=&\left( 1-t\right) \log \left( 1-t\right) +2t\log t-\left(
1+t\right) \log \left( 1+t\right) .
\end{eqnarray*}
Thus, expression \refe{eqrn} transforms into
\begin{equation*}
r_{n}^{(i,j)}=\frac{2\pi\ima}{n^{1+\delta _{i,4}}}
\int_{1/\sqrt{2}-\ima\infty }^{1/\sqrt{2}+\ima\infty }g^{(i,j)}\left(
t\right) e^{2\left( n+1\right) f(t) }\left( 1+\mathcal{O}\left(
n^{-1}\right) \right) dt.
\end{equation*}
The point $t=1/\sqrt{2}$ is the unique maximum point for $\text{Re\,}
f(t) $ on the contour of integration. Therefore, by using Laplace's method we obtain
\begin{equation*}
r_{n}^{(i,j)}=-\dfrac{\pi ^{3/2}\eta^{(i,j)}}{2^{1/4} n^{1+\delta_{i,4}}(n+1)^{1/2}}
\left\vert g^{(i,j)}\left(\sqrt{2^{-1}}\right)\right\vert
\left(\sqrt{2}-1\right)^{4n+4}
\left(1+\mathcal{O}\left( n^{-1}\right)
\right),
\end{equation*}
which gives the required estimation.\end{proof}

The above new rational approximants \refe{RApproxs} prove the irrationality of $\zeta(3)$.
\begin{corollary} (Ap\'ery's Theorem)
The real number $\zeta (3)$ is irrational.
\end{corollary}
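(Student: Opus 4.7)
The plan is to run the classical Apéry-style contradiction using any one of the twelve families of approximants. Fix an admissible index pair $(i,j)$ and assume for contradiction that $\zeta(3)=p/q$ with $p\in\mathbb{Z}$, $q\in\mathbb{N}$. Define the auxiliary quantity
\begin{equation*}
I_n := q\, n^{\omega_{i,j}}\, l_n^{3}\, r_n^{(i,j)} \;=\; p\, n^{\omega_{i,j}}\, l_n^{3}\, q_n^{(i,j)} \;-\; q\, n^{\omega_{i,j}}\, l_n^{3}\, p_n^{(i,j)}.
\end{equation*}
By the inclusions $n^{\omega_{i,j}} q_n^{(i,j)} \in \mathbb{Z}$ and $n^{\omega_{i,j}} l_n^{3} p_n^{(i,j)} \in \mathbb{Z}$ proved in the first lemma above, each term on the right is an integer, so $I_n\in\mathbb{Z}$.

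Second, I verify that $I_n\neq 0$ for all sufficiently large $n$. Every entry of the matrix $(\eta^{(i,j)})_{4,3}$ is $\pm 1$ (since $|\vartheta|/\vartheta=\pm 1$), so the asymptotic in Theorem~\ref{theoremRemainderi,j} has a strictly nonzero leading coefficient; in particular $r_n^{(i,j)}\neq 0$, hence $I_n\neq 0$, for every large enough $n$.

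Third, I push $|I_n|\to 0$. Combining the prime-number-theorem bound $l_n = O(e^{(1+\epsilon)n})$, valid for every $\epsilon>0$, with the asymptotic of Theorem~\ref{theoremRemainderi,j} yields
\begin{equation*}
|I_n| \;=\; O\!\left( n^{\,\omega_{i,j}+\delta_{i,4}-3/2}\, e^{3(1+\epsilon)n}\, (\sqrt{2}-1)^{4n}\right).
\end{equation*}
Using $(\sqrt{2}-1)^4 = 17-12\sqrt{2}$, one gets $e^3(\sqrt{2}-1)^4 \approx 0.591<1$, precisely the numerical inequality already recalled in the introduction. Choosing $\epsilon>0$ so small that $e^{3(1+\epsilon)}(\sqrt{2}-1)^4<1$ still holds, the geometric decay beats any polynomial factor in $n$, so $|I_n|\to 0$. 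This contradicts $I_n$ being a nonzero integer for all large $n$, and therefore $\zeta(3)\notin\mathbb{Q}$.

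The genuinely nontrivial work has already been done in the preceding lemmas and in Theorem~\ref{theoremRemainderi,j}; the only real obstacle is the quantitative one, namely verifying that the slack $1-e^3(\sqrt{2}-1)^4 \approx 0.409$ is large enough to absorb the $e^{\epsilon n}$ loss coming from the prime number theorem. Since this slack is strictly positive, it is routine to pick a small enough $\epsilon$, and the argument goes through uniformly in the choice of $(i,j)$.
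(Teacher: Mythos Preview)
Your argument is correct and follows essentially the same route as the paper's proof: assume $\zeta(3)=p/q$, form the integer $q\,n^{\omega_{i,j}}l_n^3 r_n^{(i,j)}$, and derive a contradiction from $e^{3}(\sqrt{2}-1)^{4}<1$ via the asymptotic of Theorem~\ref{theoremRemainderi,j} and the prime number theorem. Your write-up is in fact slightly more explicit than the paper's in justifying $I_n\neq 0$ (via $\eta^{(i,j)}=\pm1$) and in tracking the harmless polynomial factor in $n$.
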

\begin{proof} Suppose on the contrary that $\zeta \left( 3\right) =p/q$,
where $p\in \mathbb{Z}$, $q\in\mathbb{N}$, then
\begin{equation*}
qn^{\omega _{i,j}}l_{n}^{3}r_{n}^{(i,j)}=n^{\omega
_{i,j}}l_{n}^{3}q_{n}^{(i,j)}p-qn^{\omega
_{i,j}}l_{n}^{3}p_{n}^{(i,j)},
\end{equation*}
is an integer different from zero. Therefore
\begin{equation*}
1\leq qn^{\omega _{i,j}}l_{n}^{3}\left\vert r_{n}^{\left( i,j\right)
}\right\vert =\mathcal{O}\left( l_{n}^{3}\left( \sqrt{2}-1\right)
^{4n}\right),
\end{equation*}
contradicting the above assumption for $\zeta(3)$, since for any $\varepsilon >0$ and any sufficiently large $n$ the estimation $l_{n}<e^{\left( 1+\varepsilon
\right) n}$ yields $e^{3}\left( \sqrt{2}-1\right)
^{4} <1$. \end{proof}

The above Diophantine approximations to the number $\zeta(3)$ lead to the same irrationality measure given by Ap\'ery's approximants \refe{apery_frac} (see Remark \ref{remar-mu} below).

Next we will investigate other properties of the rational approximants obtained here. In particular, we will deduce the recurrence relation that they verify. 
Indeed, one can use the above functions 
\refe{RationalFi,j} and Zeilberger's algorithm of creative telescoping \cite{Zeilber} to obtain that \refe{Remainderi,j} verifies a second order recurrence relation 
(see \cite{zudilin-2} for the use of these techniques with Ap\'ery's sequence of Diophantine approximants). 
Although we will proceed in a different way. We start by showing a relationship with a simultaneous rational approximation problem to derive in a straightforward manner 
the Wronskian determinant for sequences $\left(p_n^{(i,j)}\right)_{n\geq0}$ and $\left(q_n^{(i,j)}\right)_{n\geq0}$ 
and from it the coefficients involved in the recurrence relation satisfied by these sequences.

\section{Simultaneous rational approximation problem\label{rapp}}
Here we will establish a connection of the aforementioned rational approximants with a simultaneous rational approximation problem near infinity.

Define the following polynomials 
\begin{equation}
A_{n}^{(i,j)}(z)=\sum_{k=0}^{n}a_{k,n}^{(i,j)}z^{k},\quad B_{n-\delta _{i,3}}^{(i,j)}(z) =\sum_{k=0}^{n-\delta _{i,3}}b_{k,n-\delta _{i,3}}^{\left( i,j\right)}z^{k},
\label{A_B-poly}
\end{equation}
with coefficients given in \refe{bk,n} and \refe{ak,n}, respectively. Observe that $b_{0,n-\delta _{i,3}}^{(i,j)}=-\theta ^{(i,j)}\left( -1\right)$, and from equation \refe{cond_A1} one has $A_{n}^{\left(
i,j\right) }\left( 1\right) =0$. 

Thus, from equations \refe{R1_simple} and \refe{R_{2,n}} one gets
\begin{equation}
R_{1,n}^{\left(i,j\right) }(t)=\int_{0}^{1}F_{n}^{(i,j)}(x)x^{t}dx,
\quad R_{2,n}^{(i,j)}(t)=\int_{0}^{1}G_{n}^{(i,j)}(x)x^{t}dx,\quad(\text{Re}\,t>-1),
\label{R1-R2}
\end{equation}
where
\begin{equation*}
F_{n}^{(i,j)}\left( x\right) =A_{n}^{(i,j)}\left(
x\right) +B_{n-\delta _{i,3}}^{(i,j)}\left( x\right) \log x
\text{ \quad }\mbox{and}\text{\quad }G_{n}^{(i,j)}\left(
x\right) =F_{n}^{(i,j)}\left( x\right) \log x.
\end{equation*}
Moreover, the expressions \refe{RationalFi,j} and \refe{R_{2,n}} represent the analytic continuation for the functions given in \refe{R1-R2}.

Considering the zeros of the rational functions \refe{R1_simple} and \refe{R_{2,n}}, which do not dependent on the parameters ($\rho$, $\vartheta$, $\upsilon$, $\chi$, and $\psi$) involved in \refe{Perturbs}, one has the orthogonality conditions
\begin{equation}\begin{array}{c}
\dst\int_{0}^{1}F_{n}^{(i,j)}\left( x\right) x^{k}dx=0,\quad
k=0,\ldots ,n-2+\delta _{i,1},\\
\\
\dst\int_{0}^{1}G_{n}^{(i,j)}\left( x\right) x^{k}dx=0,\quad
k=0,\ldots ,n-2.
\end{array}
\label{eq-orto-12}
\end{equation}
If the coefficients of polynomials \refe{A_B-poly}, i.e. $a_{k,n}^{(i,j)}$ $(k=0,\dots,n)$ and $b_{k,n}^{(i,j)}$ $(k=0,\dots,n-\delta_{i,3})$ were unknown, then the zeros that depend on the parameters $\rho$, $\vartheta$, $\upsilon$, $\chi$, or $\psi$ provide an extra condition to the above underdetermined linear system of equations for these coefficients.

As a consequence of the above orthogonality conditions \refe{eq-orto-12}
\begin{equation}
\begin{array}{c}
\displaystyle\int_{0}^{1}p\left( x\right) \frac{F_{n}^{(i,j)}\left( x\right) }{z-x}dx=p(z) \displaystyle\int_{0}^{1}\frac{
F_{n}^{(i,j)}\left( x\right) }{z-x}dx, \\ 
\\ 
\displaystyle\int_{0}^{1}q\left( x\right) \frac{G_{n}^{\left( i,j\right)
}\left( x\right) }{z-x}dx=q(z) \displaystyle\int_{0}^{1}\frac{
G_{n}^{(i,j)}\left( x\right) }{z-x}dx,
\end{array}
\label{Cons}
\end{equation}
where $p(z)$ and $q(z)$ are arbitrary polynomials of degree at most $n+\delta_{i,1}-1$ and $n-1$, respectively.

Denoting 
\begin{equation*}
r_{n,1}^{(i,j)}(z)=\displaystyle\int_{0}^{1}\frac{
F_{n}^{(i,j)}\left( x\right) }{z-x}dx,\quad
r_{n,2}^{(i,j)}(z)=\displaystyle\int_{0}^{1}\frac{
G_{n}^{(i,j)}\left( x\right) }{z-x}dx,
\end{equation*}
equations \refe{Cons} imply that $r_{n,1}^{(i,j)}(z)=\mathcal{O}\left(
z^{-n-\delta _{i,1}}\right)$ and $r_{n,2}^{(i,j)}(z)=\mathcal{O}\left(
z^{-n}\right)$.

Setting $p(z)=q(z)=1$ in equations \refe{Cons},
by means of a suitable addition and substraction in the numerator of their right hand sides, one obtains
\begin{align}
\int_{0}^{1}\dfrac{F_{n}^{(i,j)}(x)}{z-x}dx&=\int_{0}^{1}\frac{A_{n}^{(i,j)}\left( x\right) +B_{n-\delta _{i,3}}^{(i,j)}\left(
x\right) \log x}{z-x}dx\label{eq1ij}
\\
&=A_{n}^{(i,j)}(z) f_{1}(z)
+B_{n-\delta _{i,3}}^{(i,j)}(z) f_{2}\left(
z\right) -C_{n}^{(i,j)}(z),\notag\\
\int_{0}^{1}\dfrac{G_{n}^{(i,j)}(x)}{z-x}dx&=\int_{0}^{1}\frac{A_{n}^{(i,j)}\left( x\right) +B_{n-\delta _{i,3}}^{(i,j)}\left(
x\right) \log x}{z-x}\log xdx\label{eq2ij}\\
&=A_{n}^{(i,j)}(z) f_{2}(z)
+2B_{n-\delta _{i,3}}^{(i,j)}(z) f_{3}\left(
z\right) -D_{n}^{(i,j)}(z),\notag
\end{align}
where
\begin{align}
f_{k}(z)&=\dfrac{1}{(k-1)!}\int_{0}^{1}\frac{\log ^{k-1}x}{z-x}
dx,\quad k=1,2,3,
\label{f_k}\\
C_n^{(i,j)}(z)&=\int^1_0 \dfrac{A_n^{(i,j)}(z)+B_{n-\delta _{i,3}}^{(i,j)}(z)\log x-F_{n}^{(i,j)}(x)}{z-x}\, dx,
\label{eq-C_n}\\
D_n^{(i,j)}(z)&=\int^1_0 \dfrac{\left(A_n^{(i,j)}(z)+B_{n-\delta_{i,3}}^{(i,j)}(z)\log x\right)\log x-G_{n}^{(i,j)}(x)}{z-x}\, dx.\label{eq-D_n}
\end{align}
Accordingly, for the above system of functions \refe{f_k} and polynomials \refe{A_B-poly}, \refe{eq-C_n}, and \refe{eq-D_n} we have a simultaneous rational approximation problem near infinity. Notice that the solution of this problem depends only on the coefficients of the polynomials $A_{n}^{(i,j)}(z)$ and $B_{n-\delta_{i,3}}^{(i,j)}(z)$, since the coefficients for $z^{-j}$ $(1\leq j\leq n-1+\delta_{i,1})$ in the Laurent series expansion of $A_n^{(i,j)}(z)f_1(z)+B_{n-\delta _{i,3}}^{(i,j)}(z)f_2(z)$ and for $z^{-j}$ $(1\leq j\leq n-1)$ in the series expansion of $A_n^{(i,j)}(z)f_2(z)+2B_{n-\delta _{i,3}}^{(i,j)}(z)f_3(z)$ vanish, while the coefficients for $z^j$ $(0\leq j\leq n)$ coincide with the corresponding coefficients of $C_n^{(i,j)}(z)$ and $D_n^{(i,j)}(z)$, respectively.

\section{Recurrence relation\label{aperyRR}}

In what follows, without loss of generality, we restrict ourself to the particular case $i=1$ and $j=2$. 
The only reason for this restriction is that in general the expressions of coefficients \refe{Coeffs_Rrelation} take significantly more area for displaying them. 
All cases can be dealt with the procedure described in the sequel.

Setting $z=1$ in the simultaneous rational approximation problem \refe{eq1ij}-\refe{eq2ij} and \refe{f_k}-\refe{eq-D_n} yield 
$$
r_{n}^{(1,2)}(1)=2B_{n}^{(1,2)}(1) \zeta(3)-D_{n}^{(1,2)}(1),\quad n\geq1,
$$ where
\begin{equation*}\begin{array}{c}
 \left(2B_{n}^{(1,2)}(1)\right)_{n\geq1}=\left(q_{n}^{(1,2)}\right)_{n\geq1},\quad
\left(D_{n}^{(1,2)}(1)\right)_{n\geq1}=\left(p_{n}^{(1,2)}\right)_{n\geq 1},\\
\left(r_{n,2}^{(1,2)}(1)\right)_{n\geq 1}=\left(r_{n}^{(1,2)}\right)_{n\geq 1}.
\end{array}
\end{equation*}
Now, we will obtain an explicit expression for the Wronskian determinant involving the above sequences, i.e.
\begin{equation*}
W\left( q_{n}^{(1,2)},r_{n}^{(1,2)}\right) =\det 
\begin{pmatrix}
q_{n}^{(1,2)} & r_{n}^{(1,2)} \\ 
q_{n+1}^{(1,2)} & r_{n+1}^{(1,2)}
\end{pmatrix}
=-W\left( q_{n}^{(1,2)},p_{n}^{(1,2)}\right).
\end{equation*}

\begin{lemma}\label{wrons} The following relation
\begin{equation}
W\left( q_{n},p_{n}\right) =\frac{2\mathcal{N}_{n}}{n^{3}\left( n+1\right) ^{3}}\neq 0,\quad n\geq 1,
\label{Wronsk}
\end{equation}
where
\begin{multline*}
\mathcal{N}_{n}=(24\vartheta ^{2}n^{3}+30\vartheta ^{2}n^{2}+16\vartheta
^{2}n+3\vartheta ^{2}+9\vartheta n^{2}+5\vartheta n \\
+\vartheta -12n^{3}-21n^{2}-11n-2),\quad \vartheta \in \mathbb{N}\setminus\{1\},
\end{multline*}
holds.
\end{lemma}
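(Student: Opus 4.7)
My plan is to compute the Wronskian $W_n := W(q_n^{(1,2)}, p_n^{(1,2)}) = q_n^{(1,2)} p_{n+1}^{(1,2)} - q_{n+1}^{(1,2)} p_n^{(1,2)}$ by exploiting the integral/orthogonality structure developed in Section~\ref{rapp} and then certifying the closed form by creative telescoping.

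The starting point is the identification produced at the beginning of Section~\ref{aperyRR}: setting $z=1$ in \refe{eq2ij} gives $r_n^{(1,2)} = 2B_n^{(1,2)}(1)\,\zeta(3) - D_n^{(1,2)}(1)$, so $q_n^{(1,2)} = 2B_n^{(1,2)}(1)$ and $p_n^{(1,2)} = D_n^{(1,2)}(1)$. Since $r_n^{(1,2)} = q_n^{(1,2)}\zeta(3) - p_n^{(1,2)}$, one has $W_n = q_{n+1}^{(1,2)}\, r_n^{(1,2)} - q_n^{(1,2)}\, r_{n+1}^{(1,2)}$. Substituting $r_n^{(1,2)} = \int_0^1 G_n^{(1,2)}(x)/(1-x)\,dx$ from \refe{R1-R2} turns the Wronskian into
\begin{equation*}
W_n = 2\int_0^1 \frac{B_{n+1}^{(1,2)}(1)\, G_n^{(1,2)}(x) - B_n^{(1,2)}(1)\, G_{n+1}^{(1,2)}(x)}{1-x}\,dx,
\end{equation*}
which is regular on $[0,1]$ because $F_m(1)=A_m(1)=0$ forces $G_m(1)=0$ for every $m$.

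Expanding $G_m = A_m(x)\log x + B_m(x)\log^2 x$ and evaluating termwise through the moments $\int_0^1 x^k \log x\,dx = -1/(k+1)^2$ and $\int_0^1 x^k \log^2 x\,dx = 2/(k+1)^3$ converts $W_n$ into a finite double sum in the index $k$ whose summand is rational in $n$, $k$, and $\vartheta$ through the explicit formulas \refe{bk,n}--\refe{ak,n} with $\theta^{(1,2)}(t)=(t+\vartheta n+1)/(t-n+1)$. Applying Zeilberger's creative-telescoping algorithm to this summand produces a common second-order linear recurrence $\widetilde{A}(n,\vartheta)\,y_{n+1} + \widetilde{B}(n,\vartheta)\,y_n + \widetilde{C}(n,\vartheta)\,y_{n-1}=0$ satisfied simultaneously by $q_n^{(1,2)}$, $p_n^{(1,2)}$ and $r_n^{(1,2)}$; the orthogonality relations \refe{eq-orto-12} and their consequence \refe{Cons} underlie the existence of such a short recurrence. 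The Wronskian then obeys the standard identity $W_{n+1}/W_n = \widetilde{C}(n+1,\vartheta)/\widetilde{A}(n+1,\vartheta)$, which, together with the initial value $W_1$ computed by direct substitution of the formulas of Lemma~2.1 at $n=1,2$, telescopes into the claimed closed form $2\mathcal{N}_n/(n^3(n+1)^3)$.

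The expected main obstacle is the bookkeeping of the $\vartheta$-dependence: unlike the classical Apéry case (which corresponds to $\theta\equiv 1$), the perturbation $\theta^{(1,2)}$ injects quadratic dependence on $\vartheta$ through $b_{k,n}^{(1,2)}$ and $\varphi^{(1,2)}$, so $\widetilde{A}, \widetilde{B}, \widetilde{C}$ become bivariate polynomials in $(n,\vartheta)$ and the reduction of $\prod_{k=1}^{n-1}\widetilde{C}(k+1,\vartheta)/\widetilde{A}(k+1,\vartheta)$ to $\mathcal{N}_n/(n^3(n+1)^3)$ requires careful factorisation. Because $W_n$ is a rational function of $n$ whose numerator is of degree two in $\vartheta$, a finite verification at a handful of pairs $(n,\vartheta)$ suffices to certify the closed form once its rational-function shape has been established by telescoping. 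Finally, $\mathcal{N}_n\neq 0$ for $n\geq 1$ and $\vartheta\in\mathbb{N}\setminus\{1\}$ follows from the regrouping $\mathcal{N}_n = 12(2\vartheta^2-1)n^3 + 3(10\vartheta^2+3\vartheta-7)n^2 + (16\vartheta^2+5\vartheta-11)n + (3\vartheta^2+\vartheta-2)$, each coefficient being strictly positive for $\vartheta\geq 2$.
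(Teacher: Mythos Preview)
Your overall strategy---derive the three-term recurrence first (via Zeilberger) and then obtain the Wronskian from the standard identity $\alpha_n W_{n+1}=\gamma_n W_n$ together with one initial value---is viable, but it is the \emph{reverse} of what the paper does. In the paper, Lemma~\ref{wrons} is proved \emph{directly} from the orthogonal-form machinery and is then used in Theorem~\ref{TRR} to manufacture the recurrence coefficients; you instead assume (or algorithmically produce) the recurrence in order to compute $W_n$. Concretely, the paper evaluates the single integral
\[
I_n=2\int_0^1\frac{F_n^{(1,2)}(x)\,F_{n+1}^{(1,2)}(x)}{1-x}\,dx
\]
in two ways. Applying \refe{Cons} with the factor $F_{n+1}$ absorbed gives $I_n=q_n^{(1,2)}r_{n+1}^{(1,2)}$; applying it with $F_n$ absorbed instead gives $I_n=q_{n+1}^{(1,2)}r_n^{(1,2)}$ minus a short boundary correction consisting of just three explicit terms, namely $a_{n+1,n+1}^{(1,2)}R_{1,n}^{(1,2)}(n)$, $b_{n+1,n+1}^{(1,2)}\bigl(R_{2,n}^{(1,2)}(n)+R_{2,n}^{(1,2)}(n-1)\bigr)$ and $b_{n,n+1}^{(1,2)}R_{2,n}^{(1,2)}(n-1)$. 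Their sum is computed in closed form to give $2\mathcal{N}_n/(n^3(n+1)^3)$. No creative telescoping, no recurrence, and no product is needed.

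There is also a genuine gap in your write-up. The step ``evaluating termwise through the moments $\int_0^1 x^k\log x\,dx$ and $\int_0^1 x^k\log^2 x\,dx$'' does not apply to $\int_0^1 G_m(x)/(1-x)\,dx$: the kernel is $(1-x)^{-1}$, not $1$, so those moments never enter. The integral $\int_0^1 G_m(x)/(1-x)\,dx$ is precisely $r_m^{(1,2)}$, which individually contains $\zeta(3)$ and is certainly not a finite sum; only the particular combination $q_{n+1}r_n-q_nr_{n+1}$ is rational, and your argument as written does not exhibit that cancellation. To carry out your plan you would either (i) work directly with the finite expression $W_n=q_n^{(1,2)}p_{n+1}^{(1,2)}-q_{n+1}^{(1,2)}p_n^{(1,2)}$ using the closed forms of Lemma~2.1, or (ii) run Zeilberger on the hypergeometric summand of $r_n^{(1,2)}=\sum_{t\ge0}R_{2,n}^{(1,2)}(t)$ to obtain the recurrence (as the paper itself notes is possible just before Section~\ref{rapp}), and then telescope $W_{n+1}/W_n=\gamma_n/\alpha_n=n^3\mathcal{N}_{n+1}/\bigl((n+2)^3\mathcal{N}_n\bigr)$, which indeed collapses to the claimed closed form. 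Either route works, but neither is what you actually wrote.

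Your proof that $\mathcal{N}_n\neq0$ is cleaner than the paper's: the paper argues via the (real, non-integer) roots of $\mathcal{N}_n$ as a quadratic in $\vartheta$, whereas your regrouping as a polynomial in $n$ with all coefficients positive for $\vartheta\ge2$ settles the matter immediately.
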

Here for proving \refe{Wronsk} we will follow the procedure indicated in \cite[Lemma 3.2, pp. 7-8]{arvesu}
\begin{proof} Let us consider the integral
\begin{equation*}
I_{n}=2\int_{0}^{1}\frac{F_{n}^{(1,2)}\left( x\right) F_{n+1}^{(1,2)}\left( x\right) }{1-x}
dx,\quad n\geq1.
\end{equation*}
Using formulas \refe{Cons} and the fact that $A_{n}^{(1,2)}(1)=0$, one gets
\begin{equation}
I_{n} =2B_{n}^{(1,2)}\left( 1\right) \int_{0}^{1}\frac{G_{n+1}^{(1,2)}\left( x\right) }{1-x}dx =q_{n}^{(1,2)}r_{n+1}^{(1,2)}.\label{In1}
\end{equation}
On the other hand,
\begin{align*}
 \dfrac{F_{n}^{(1,2)}(x)F_{n+1}^{(1,2)}(x)}{1-x}&=B_{n+1}^{(1,2)}(1)\dfrac{G_{n}^{(1,2)}(x)}{1-x}
-F_{n}^{(1,2)}(x)\left(\tilde{A}_{n}^{(1,2)}(x)+\tilde{B}_{n}^{(1,2)}(x)\log x\right),
\end{align*}
where 
\begin{equation*}
\tilde{A}^{(1,2)}_{n}(x)=\frac{A_{n+1}^{(1,2)}(1)-A_{n+1}^{(1,2)}(x)}{1-x},\quad
\tilde{B}_{n}^{(1,2)}(x)
=\frac{B_{n+1}^{(1,2)}(1)-B_{n+1}^{(1,2)}(x)}{1-x}.
\end{equation*}
Hence,
\begin{eqnarray}
I_{n} &=&2\int_{0}^{1}\frac{F_{n}^{(1,2)}\left( x\right) F_{n+1}^{(1,2)}\left( x\right) }{1-x
}dx =q_{n+1}^{(1,2)}r_{n}^{(1,2)} -2\left[a_{n+1,n+1}^{(1,2)}R_{1,n}^{(1,2)}\left(
n\right)\right.  \label{In2}\\
&+&\left.b_{n+1,n+1}^{(1,2)}\left(R_{2,n}^{(1,2)}\left(
n\right) +R_{2,n}^{(1,2)}(n-1)\right)+b_{n,n+1}^{\left(
1,2\right) }R_{2,n}^{(1,2)}\left( n-1\right)\right].\notag
\end{eqnarray}
Equating \refe{In1} and \refe{In2} one obtains
\begin{eqnarray}
W\left( q_{n},r_{n}\right)  &=&-2\left[a_{n+1,n+1}^{(1,2)}R_{1,n}^{(1,2)}\left(
n\right)\right.  \notag \\
&+&\left.b_{n+1,n+1}^{(1,2)}\left(R_{2,n}^{(1,2)}\left(
n\right) +R_{2,n}^{(1,2)}(n-1)\right)+b_{n,n+1}^{\left(
1,2\right) }R_{2,n}^{(1,2)}\left( n-1\right)\right].\notag \\
&=&-\frac{2\mathcal{N}_{n}}{n^{3}\left( n+1\right) ^{3}},\quad n\geq 1.
\notag
\end{eqnarray}
Finally, observe that $\mathcal{N}_{n}$ is a polynomial of degree $2$ in $\vartheta$, with non-integer zeros. Indeed, when $n\to\infty$ their zeros tend to $1/\sqrt{2}$ and $-1/\sqrt{2}$, respectively. Thus, $\mathcal{N}_{n}\not=0$ for $\vartheta\in\mathbb{N}\setminus\{1\}$, and $n\geq1$,
which completes the proof.\end{proof}

\begin{theorem}
The sequences $\left( p_{n}^{(1,2)}\right) _{n\geq 1}$, $\left(
q_{n}^{(1,2)}\right) _{n\geq 1}$ and $\left( r_{n}^{\left(
1,2\right) }\right) _{n\geq 1}$ verify the following second order
recurrence relation
\begin{equation}
\alpha _{n}y_{n+2}+\beta _{n}y_{n+1}+\gamma _{n}y_{n}=0,\quad n=1,2,\ldots,\quad \vartheta \in \mathbb{N}\setminus\{1\},
\label{R_relation1,2}
\end{equation}
where
\begin{equation}
\begin{array}{c}
\alpha _{n}=(n+2)^{3}(24\vartheta ^{2}n^{3}+30\vartheta
^{2}n^{2}+16\vartheta ^{2}n+3\vartheta ^{2}+9\vartheta n^{2}+5\vartheta
n+\vartheta \\ 
-12n^{3}-21n^{2}-11n-2), \\ 
\\ 
\beta _{n}=-2(408\vartheta ^{2}n^{6}+2346\vartheta ^{2}n^{5}+5336\vartheta
^{2}n^{4}+6130\vartheta ^{2}n^{3}+3810\vartheta ^{2}n^{2}\\
+1268\vartheta ^{2}n +172\vartheta ^{2}+153\vartheta n^{5}+769\vartheta n^{4}+1417\vartheta
n^{3}+1143\vartheta n^{2}+382\vartheta n\\
+52\vartheta -204n^{6}-1275n^{5}-3181n^{4}-4011n^{3}-2667n^{2}-886n-120), \\ 
\\ 
\gamma _{n}=n^{3}(24\vartheta ^{2}n^{3}+102\vartheta ^{2}n^{2}+148\vartheta
^{2}n+73\vartheta ^{2}+9\vartheta n^{2}+23\vartheta n+15\vartheta \\ 
-12n^{3}-57n^{2}-89n-46).
\end{array}
\label{Coeffs_Rrelation}
\end{equation}
\label{TRR}
\end{theorem}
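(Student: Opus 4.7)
By linearity it suffices to show that both $(q_n^{(1,2)})$ and $(p_n^{(1,2)})$ satisfy \refe{R_relation1,2}, since then so does $r_n^{(1,2)} = q_n^{(1,2)}\zeta(3) - p_n^{(1,2)}$. Lemma \ref{wrons} guarantees that these two sequences are linearly independent, so they span the solution space of any second-order linear recurrence they share. Consequently the coefficients $\alpha_n,\beta_n,\gamma_n$ are determined, up to a common rational factor, by the vanishing of the $3\times 3$ Casoratian
\begin{equation*}
\det\begin{pmatrix} y_n & y_{n+1} & y_{n+2} \\ q_n^{(1,2)} & q_{n+1}^{(1,2)} & q_{n+2}^{(1,2)} \\ p_n^{(1,2)} & p_{n+1}^{(1,2)} & p_{n+2}^{(1,2)} \end{pmatrix} = 0.
\end{equation*}
Expanding along the top row gives $\alpha_n\propto W_n$, $\gamma_n\propto W_{n+1}$, and $\beta_n \propto -\widetilde W_n$, where $W_n = q_n^{(1,2)} p_{n+1}^{(1,2)} - q_{n+1}^{(1,2)} p_n^{(1,2)}$ is the Wronskian of Lemma \ref{wrons} and $\widetilde W_n := q_n^{(1,2)} p_{n+2}^{(1,2)} - q_{n+2}^{(1,2)} p_n^{(1,2)}$ is its two-step analogue.

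The first and third coefficients are already in hand. A direct expansion of $\mathcal N_{n+1}$ from Lemma \ref{wrons} and cancellation of the common factor $2/(n^{3}(n+1)^{3}(n+2)^{3})$ reproduce $\alpha_n = (n+2)^3\mathcal N_n$ and $\gamma_n = n^3\mathcal N_{n+1}$ of \refe{Coeffs_Rrelation}. To obtain $\widetilde W_n$, I would repeat the integral argument of Lemma \ref{wrons}, now applied to
\begin{equation*}
J_n := 2\int_0^1 \frac{F_n^{(1,2)}(x)\, F_{n+2}^{(1,2)}(x)}{1-x}\,dx.
\end{equation*}
Writing $F_{n+2}(x) = B_{n+2}^{(1,2)}(1)\log x - (1-x)[\widehat A(x) + \widehat B(x)\log x]$, with $\widehat A,\widehat B$ the polynomials of degree $n+1$ obtained from $A_{n+2}^{(1,2)}(x)/(1-x)$ (possible since $A_{n+2}^{(1,2)}(1)=0$) and from $(B_{n+2}^{(1,2)}(1)-B_{n+2}^{(1,2)}(x))/(1-x)$, and invoking \refe{Cons} at $z=1$, one first evaluates $J_n = q_n^{(1,2)} r_{n+2}^{(1,2)}$ plus finitely many boundary corrections. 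Swapping the roles of $F_n$ and $F_{n+2}$ (using $A_n^{(1,2)}(1) = 0$) similarly produces $J_n = q_{n+2}^{(1,2)} r_n^{(1,2)}$ plus dual corrections. Subtracting the two evaluations isolates $\widetilde W_n = q_n^{(1,2)} r_{n+2}^{(1,2)} - q_{n+2}^{(1,2)} r_n^{(1,2)}$ as an explicit finite linear combination of $R_{1,n}^{(1,2)}(k)$ and $R_{2,n}^{(1,2)}(k)$ for $k\in\{n-1,n,n+1\}$ with coefficients drawn from \refe{bk,n}--\refe{an,n}.

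The main obstacle is algebraic rather than conceptual: one must insert the closed-form binomial values of $a_{k,n+2}^{(1,2)}$, $b_{k,n+1}^{(1,2)}$ and of $R_{1,n}^{(1,2)},R_{2,n}^{(1,2)}$ at the three integer points, and then collect a sizeable quantity of terms polynomial in $n$ and quadratic in $\vartheta$ to arrive at the stated degree-six $\beta_n$. A convenient global check is that the leading coefficients $24\vartheta^2 n^6$, $-816\vartheta^2 n^6$, $24\vartheta^2 n^6$ of $\alpha_n,\beta_n,\gamma_n$ produce the characteristic equation $\lambda^2 - 34\lambda + 1 = 0$, whose roots $17\pm 12\sqrt 2 = (\sqrt 2\pm 1)^4$ match the geometric growth rate of $r_n^{(1,2)}$ coming from Theorem~\ref{theoremRemainderi,j} via Poincar\'e's theorem.
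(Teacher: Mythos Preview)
Your plan is sound but it diverges from the paper's route precisely at the determination of $\beta_n$. You propose to compute the \emph{two-step} Casoratian $\widetilde W_n=q_n^{(1,2)}p_{n+2}^{(1,2)}-q_{n+2}^{(1,2)}p_n^{(1,2)}$ directly by repeating the integral argument of Lemma~\ref{wrons} with $F_{n+2}^{(1,2)}$ in place of $F_{n+1}^{(1,2)}$. This works: writing $F_n/(1-x)$ in the form $B_n(1)\log x/(1-x)-\check A(x)-\check B(x)\log x$ with $\deg\check A=\deg\check B=n-1$ and using the orthogonality of $F_{n+2},G_{n+2}$ gives $J_n=q_n^{(1,2)}r_{n+2}^{(1,2)}$ \emph{exactly} (no corrections, since $n-1\le n+1$ and $n-1\le n$); all the boundary terms arise in the dual evaluation against $F_n,G_n$, at degrees $n-1,n,n+1$ as you say. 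Subtracting then yields $\widetilde W_n$ as a closed rational expression in $n$ and $\vartheta$, and clearing the common factor $2/\bigl(n^3(n+1)^3(n+2)^3\bigr)$ produces $\beta_n$.

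The paper sidesteps this computation entirely. After deriving $\alpha_n=(n+2)^3\mathcal N_n$ and $\gamma_n=n^3\mathcal N_{n+1}$ from two consecutive Wronskian identities, it keeps the middle coefficient in the implicit form
\[
\tilde\beta_n=\alpha_n\,\frac{q_{n+2}^{(1,2)}}{q_{n+1}^{(1,2)}}+\gamma_n\,\frac{q_{n}^{(1,2)}}{q_{n+1}^{(1,2)}},
\]
invokes a previously established fact (referenced as \cite[Theorem~3.3]{arvesu}) that $\tilde\beta_n/n^{6}$ converges, hence $\tilde\beta_n$ is a polynomial in $n$ of degree at most $6$, and then \emph{interpolates} its seven coefficients by plugging in the numerical values of $q_1^{(1,2)},\dots,q_9^{(1,2)}$ at $n=1,\dots,7$. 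So where you would carry out a five-term boundary sum and a large polynomial simplification, the paper trades that algebra for a finite linear system plus an external structural lemma guaranteeing polynomiality. Your approach is heavier on computation but self-contained; the paper's is lighter but leans on the cited result. Both lead to the same $\beta_n$, and your consistency check via the characteristic equation $\lambda^2-34\lambda+1=0$ matches Remark~\ref{remar-mu}.
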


\begin{proof} From previous Lemma \ref{wrons} we write
\begin{eqnarray*}
q_{n+1}^{(1,2)}r_{n+2}^{(1,2)} &=&q_{n+2}^{\left(
1,2\right) }r_{n+1}^{(1,2)}-W_{n+1}\Leftrightarrow
r_{n+2}^{(1,2)}=\frac{q_{n+2}^{(1,2)}}{
q_{n+1}^{(1,2)}}r_{n+1}^{(1,2)}-\frac{W_{n+1}}{
q_{n+1}^{(1,2)}}, \\
q_{n}^{(1,2)}r_{n+1}^{(1,2)} &=&q_{n+1}^{\left(
1,2\right) }r_{n}^{(1,2)}-W_{n}\Leftrightarrow \frac{
q_{n}^{(1,2)}}{q_{n+1}^{(1,2)}}r_{n+1}^{\left(
1,2\right) }=r_{n}^{(1,2)}-\frac{W_{n}}{q_{n+1}^{\left(
1,2\right) }},
\end{eqnarray*}
where
\begin{equation*}
W_{n}=\frac{2\mathcal{N}_{n}}{n^{3}\left( n+1\right) ^{3}}.
\end{equation*}
Thus, multiplying the first equation by $W_{n}$, the second one by $
-W_{n+1}$, and adding both equations one gets
\begin{equation*}
W_{n}r_{n+2}^{(1,2)}-\left( W_{n}\frac{q_{n+2}^{\left(
1,2\right) }}{q_{n+1}^{(1,2)}}+W_{n+1}\frac{q_{n}^{\left(
1,2\right) }}{q_{n+1}^{(1,2)}}\right) r_{n+1}^{\left(
1,2\right) }+W_{n+1}r_{n}^{(1,2)}=0,\quad n\geq 1.
\end{equation*}
Then, multiplying this equation by $n^{3}\left( n+1\right)
^{3}\left( n+2\right) ^{3}$, one has
\begin{equation*}
\alpha _{n}r_{n+2}^{(1,2)}-\tilde{\beta}_{n}r_{n+1}^{\left(
1,2\right) }+\gamma _{n}r_{n}^{(1,2)}=0,\quad n=1,2,\dots ,
\end{equation*}
where $\alpha _{n}$ and $\gamma _{n}$ are given in \refe{Coeffs_Rrelation}, and
\begin{equation}
\tilde{\beta}_{n}=\alpha _{n}\frac{q_{n+2}^{(1,2)}}{
q_{n+1}^{(1,2)}}+\gamma _{n}\frac{q_{n}^{(1,2)}}{
q_{n+1}^{(1,2)}}.  \label{beta_n_B_n}
\end{equation}
This implies the verification of the second order recurrence relation
\begin{equation}
\alpha _{n}y_{n+2}-\tilde{\beta}_{n}y_{n+1}+\gamma _{n}y_{n}=0,\quad
n=1,2,\dots,\label{eq-ttrr-generic}
\end{equation}
by the sequences $(q_{n}^{(1,2)})_{n\geq 1}$, $(p_{n}^{\left(
1,2\right) })_{n\geq 1}$ and $(r_{n}^{(1,2)})_{n\geq 1}$. Furthermore,
from equation \refe{eq-ttrr-generic} one can write
\begin{align*}
\tilde{\beta}_{n}=\alpha _{n}\frac{p_{n+2}^{(1,2)}}{
p_{n+1}^{(1,2)}}+\gamma _{n}\frac{p_{n}^{(1,2)}}{
p_{n+1}^{(1,2)}}& =\alpha _{n}\frac{r_{n+2}^{(1,2)}
}{r_{n+1}^{(1,2)}}+\gamma _{n}\frac{r_{n}^{(1,2)}}{
r_{n+1}^{(1,2)}}  =\alpha _{n}\frac{q_{n+2}^{(1,2)}}{q_{n+1}^{\left( 1,2\right)
}}+\gamma _{n}\frac{q_{n}^{(1,2)}}{q_{n+1}^{(1,2)}}.
\end{align*}
Notice that from \refe{beta_n_B_n} the sequence $\tilde{\beta}_{n}/n^{6}$
converges when $n\rightarrow \infty $ (see \cite[Theorem 3.3, p.11]{arvesu}). Thus, by setting $n=1,2,\ldots ,7$ in relation \refe{beta_n_B_n} one gets a linear system of equations for determining explicitly the coefficients of $\tilde{\beta}_{n}=an^{6}+bn^{5}+cn^{4}+dn^{3}+en^{2}+fn+g$.  The solution of the resulting system gives us the coefficient $\beta _{n}$ given in \refe{Coeffs_Rrelation}, where $\tilde{\beta}_{n}=-\beta_{n}$. The theorem is completely proved.\end{proof}

\begin{remark}\label{remar-mu} 
The characteristic equation for the recurrence relation \refe{R_relation1,2} is $t^2-34t+1$,  which coincides with the one derived from the Ap\'ery's recurrence relation \refe{apery-diff-eq-1}. Accordingly, our Diophantine approximations do not improve the irrationality measure $\mu$ obtained in \cite{apery,beuker-1,nesterenko}, i.e  $\mu=1+(\log(\sqrt{2}+1)^4+3)/(\log(\sqrt{2}+1)^4-3)$.
\end{remark}

An important consequence of the above theorem is the continued fraction representation of the number $\zeta(3)$. Below we present one of 
various possible continued fraction representations that can be deduced from our approach (see the rational functions \refe{RationalFi,j} as 
well as the simultaneous rational approximation problem \refe{eq1ij}-\refe{eq2ij}).

Recall that two irregular continued fractions
\begin{equation*}
a_{0}+\frac{b_{1}\mid }{\mid a_{1}}+\frac{b_{2}\mid }{\mid a_{2}}+\frac{
b_{3}\mid }{\mid a_{3}}+\cdots +\frac{b_{n}\mid }{\mid a_{n}},\quad
a_{0}^{\prime }+\frac{b_{1}^{\prime }\mid }{\mid a_{1}^{\prime }}+\frac{
b_{2}^{\prime }\mid }{\mid a_{2}^{\prime }}+\frac{b_{3}^{\prime }\mid }{\mid
a_{3}^{\prime }}+\cdots +\frac{b_{n}^{\prime }\mid }{\mid a_{n}^{\prime }},
\end{equation*}
are said to be equivalent if there exists a non-zero sequence $\left(
c_{n}\right) _{n\geq 0}$, with $c_{0}=1$, such that (see \cite[p. 20]{Jones})
\begin{equation*}
a_{n}^{\prime }=c_{n}a_{n},\quad n=0,1,2,\ldots ,\quad b_{n}^{\prime
}=c_{n}c_{n-1}b_{n},\quad n=1,2,\ldots 
\end{equation*}
Furthermore, if $\left( p_{n}\right) _{n\geq -1}$ and $\left( q_{n}\right) _{n\geq -1}$
are two sequences such that $q_{-1}=0$, $p_{-1}=q_{0}=1$ and $
p_{n}q_{n-1}-p_{n-1}q_{n}\neq 0$ for $n=0,1,2,\ldots $, then there exists a
unique irregular continued fraction
\begin{equation}
a_{0}+\frac{b_{1}\mid }{\mid a_{1}}+\frac{b_{2}\mid }{\mid a_{2}}+\frac{
b_{3}\mid }{\mid a_{3}}+\cdots +\frac{b_{n}\mid }{\mid a_{n}},  \label{ICF}
\end{equation}
whose $n$-th numerator is $p_{n}$ and $n$-th denominator is $q_{n}$, for
each $n\geq 0$. More precisely (see \cite[p. 31]{Jones})
\begin{equation*}
a_{0}=p_{0},\quad a_{1}=q_{1},\quad b_{1}=p_{1}-p_{0}q_{1},
\end{equation*}
\begin{equation*}
a_{n}=\frac{p_{n}q_{n-2}-p_{n-2}q_{n}}{p_{n-1}q_{n-2}-p_{n-2}q_{n-1}},\quad
b_{n}=\frac{p_{n-1}q_{n}-p_{n}q_{n-1}}{p_{n-1}q_{n-2}-p_{n-2}q_{n-1}},\quad
n=2,3,\ldots
\end{equation*}

\begin{corollary}
There holds the following continued fraction expansion
\begin{equation}
\zeta (3)=\frac{9\mid }{\mid 8}+\frac{-184\mid }{\mid 359}+\frac{-30672\mid 
}{\mid \quad \mathcal{Q}_{3}\quad }+\frac{\mathcal{P}_{4}\mid }{\mid 
\mathcal{Q}_{4}}+\cdots +\frac{\mathcal{P}_{n}\mid }{\mid \mathcal{Q}_{n}}
+\cdots ,  \label{ICF2}
\end{equation}
where
\begin{equation*}
\mathcal{P}_{n}=-9(n-2)^{3}(n-1)^{3}\left( 28n^{3}-213n^{2}+543n-464\right)
(28n^{3}-45n^{2}+27n-6),
\end{equation*}
and
\begin{equation*}
\mathcal{Q}
_{n}=6(476n^{6}-2907n^{5}+7077n^{4}-8715n^{3}+5715n^{2}-1926n+264).
\end{equation*}
\end{corollary}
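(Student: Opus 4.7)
The plan is to specialize $\vartheta=2$ in Theorem \ref{TRR} (the value producing the numerical coefficients visible in \eqref{ICF2}) and convert the three-term recurrence \eqref{R_relation1,2} into the claimed irregular continued fraction by means of the Jones formulas displayed immediately above the corollary. First I would compute the low-index data from \eqref{Remainderi,j} with $\vartheta=2$, which yields $p_1^{(1,2)}=9$, $q_1^{(1,2)}=8$, $p_2^{(1,2)}=1077/8$, $q_2^{(1,2)}=112$, together with the convention $p_{-1}=1$, $q_{-1}=0$, $p_0=0$, $q_0=1$. The leading Jones entries are then $a_1=q_1=8$ and $b_1=p_1-p_0q_1=9$, matching \eqref{ICF2}. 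Convergence $p_n^{(1,2)}/q_n^{(1,2)}\to\zeta(3)$ is already supplied by Theorem \ref{theoremRemainderi,j}, so once the coefficients are identified the continued fraction converges to $\zeta(3)$ automatically.

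For $n\geq 3$, applying the recurrence \eqref{R_relation1,2} at index $n-2$ to both solutions $p_n,q_n$ and cross-multiplying produces
$$
p_n q_{n-2}-p_{n-2}q_n=-\frac{\beta_{n-2}}{\alpha_{n-2}}(p_{n-1}q_{n-2}-p_{n-2}q_{n-1}),\quad p_{n-1}q_n-p_nq_{n-1}=-\frac{\gamma_{n-2}}{\alpha_{n-2}}(p_{n-1}q_{n-2}-p_{n-2}q_{n-1}).
$$
The denominators are nonzero by Lemma \ref{wrons}, so the Jones formulas collapse to $a_n=-\beta_{n-2}/\alpha_{n-2}$ and $b_n=-\gamma_{n-2}/\alpha_{n-2}$. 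I would then apply the equivalence transformation with $c_0=c_1=1$, $c_2=24$ (fixed so that the rescaling at $n=2$ reproduces the prescribed $a_2=359,\,b_2=-184$ obtained directly from Jones with $p_1,q_1,p_2,q_2$), and $c_n=\alpha_{n-2}$ for $n\geq 3$. This rescales the CF coefficients to $a_n'=-\beta_{n-2}$ for $n\geq 3$ and $b_n'=-\alpha_{n-3}\gamma_{n-2}$ for $n\geq 4$, with boundary value $b_3'=-c_2\gamma_1=-30672$.

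The final step is to verify the polynomial identities $-\beta_{n-2}=\mathcal{Q}_n$ and $-\alpha_{n-3}\gamma_{n-2}=\mathcal{P}_n$ in $n$. Specializing \eqref{Coeffs_Rrelation} to $\vartheta=2$ gives the factorizations $\alpha_m=3(m+2)^3(28m^3+39m^2+21m+4)$ and $\gamma_m=3m^3(28m^3+123m^2+183m+92)$; the shifts $m\mapsto n-3$ and $m\mapsto n-2$ reproduce the two cubic factors in $\mathcal{P}_n$, while expanding $-\beta_{n-2}$ as a sextic in $n$ matches $\mathcal{Q}_n$ coefficient by coefficient. The main obstacle is precisely this polynomial arithmetic, together with the subsidiary verification that the Jones formulas at $n=2,3$ with the boundary choice $c_2=24$ reproduce all the initial CF data $a_2=359,\,b_2=-184,\,\mathcal{Q}_3=146736,\,b_3=-30672$.
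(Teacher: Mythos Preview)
Your proposal is correct and follows essentially the same route as the paper's own proof: specialize $\vartheta=2$, read off $a_n=-\beta_{n-2}/\alpha_{n-2}$ and $b_n=-\gamma_{n-2}/\alpha_{n-2}$ for $n\ge 3$ from the recurrence \eqref{R_relation1,2}, and then apply the equivalence transformation with $c_0=c_1=1$, $c_2=24$, and $c_n$ proportional to $\alpha_{n-2}$ for $n\ge 3$ (the paper writes $c_n=-\alpha_{n-2}$, but your sign $c_n=\alpha_{n-2}$ is the one that actually reproduces the displayed $\mathcal{Q}_n$ and the boundary value $-30672$). Your explicit verification of the initial Jones data and of the polynomial identities $-\beta_{n-2}=\mathcal{Q}_n$, $-\alpha_{n-3}\gamma_{n-2}=\mathcal{P}_n$ merely fills in computations the paper leaves to the reader.
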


\begin{proof} Setting $p_{-1}=q_{0}=1$, $p_{0}=q_{-1}=0$, and 
$\vartheta =2$ one gets
\begin{equation}
a_{1}=8,\quad a_{2}=359/24, \quad b_{1}=9,\quad b_{2}=-23/3.
\label{InitialE}
\end{equation}
Moreover, from the recurrence relation \refe{R_relation1,2} one has
\begin{equation*}
y_{n}=-\frac{\beta _{n-2}}{\alpha _{n-2}}y_{n-1}-\frac{\gamma _{n-2}}{\alpha
_{n-2}}y_{n-2}.
\end{equation*}
Thus, we have constructed the elements of the irregular continued fraction \refe{ICF} which satisfy \refe{InitialE}, i.e.
\begin{equation*}
a_{n}=-\frac{\beta _{n-2}}{\alpha _{n-2}},\quad b_{n}=-\frac{\gamma _{n-2}}{
\alpha _{n-2}},\quad n\geq 3.
\end{equation*}
With the
choice $c_{0}=c_{1}=1$, $c_{2}=24$ and $c_{n}=-\alpha _{n-2}$, for $n\geq 3$, we obtain the irregular continued fraction \refe{ICF2}.\end{proof}

\section{Comparison of results}
Denote $\pi^{(i,j)}_{n}=p^{(i,j)}_{n}/q^{(i,j)}_{n}$, where
the integers $i$, $j$ are such that $0\leq i\leq4$, $0\leq j\leq3$. By 
$\pi^{(0,0)}_{n}$ we denote the Ap\'ery's approximants, where
\begin{equation*}
\begin{array}{c}
\dst q^{(0,0)}_{n}=\sum_{k=0}^{n}b^{(n)}_{k},\quad b^{(n)}_{k}=\binom{n +k}{k}^2\binom{n}{k}^2,\\
p^{(0,0)}_{n}\dst=\sum_{k=1}^{n}\left(b^{(n)}_{k}H_{k}^{(3)}-a^{(n)}_{k}H^{(2)}_{k}\right),\quad a^{(n)}_{k}=\left(H_{n + k} - 2H_{k} + H_{n-k}\right)b^{(n)}_k.
\end{array}
\end{equation*}

In Figure \ref{graph1}, a comparison between twelve Diophantine approximants $\pi^{(i,j)}_{n}$ corresponding to  different choices of parameters and Ap\'ery's approximants $\pi^{(0,0)}_{n}$ is given. We illustrate this comparison by means of a rectangular array of squares formed by thirteen rows and nine columns. We use a grayscale output, in which the color of each square is determined by the value of the function
\begin{equation}
f(\pi^{(i,j)}_{n})=\abs{\left(\log\abs{\zeta(3)-\pi^{(i,j)}_{n}}\right)^{-1}},
\quad 0\leq i\leq4,\,\,0\leq j\leq3,\quad n=2,\dots10,
\label{fx}
\end{equation}
ranges from $0.0144346\dots$ to $0.137009\dots$. Moreover, the values close to the minimum of \refe{fx} are shown as white squares while its maxima are shown as black squares. Indeed, ten iterations (see columns in Figure \ref{graph1}) are enough to reveal the high accuracy of our results.  Clearly, in Figure \ref{graph1} the darkness decreases as the number of iterations grows, which is in accordance with the analytical results given in Sections \ref{section2} and \ref{aperyRR}. See also Tables~\ref{Apery-P11}--\ref{Apery-P13} below, in which a comparison of rates of convergence of four selected cases among the above twelve rational approximants, namely $\pi^{(0,0)}_{n}$, $\pi^{(1,1)}_{n}$ for $\rho =2$, $\pi^{(1,2)}_{n}$ for $\vartheta=2$, and $\pi^{(1,3)}_{n}$ for $\upsilon =\chi =\psi =1$ is presented.

\section{Concluding remarks\label{conclu}}

We have obtained a set of rational approximants leading to the irrationality of $\zeta(3)$, with nice rates of convergence to $\zeta(3)$ --as fast as Ap\'ery's approximants. The starting point in our approach is a family of rational functions \refe{RationalFi,j} (which can be considered as modifications of the rational function introduced firstly by Beukers in \cite[p. 97]{beuker-3} and later by Nesterenko in \cite{nesterenko,nesterenko-2}). We have linked this approach with a simultaneous rational approximation problem \refe{eq1ij}-\refe{eq2ij}, which allows to compute in a straightforward manner the coefficients of the second order recurrence relation for the sequences of numerator and denominator of Diophantine approximants to $\zeta(3)$ based on the Wronskian determinant for these sequences. However, small variations in aforementioned starting point might lead to rational approximants that does not prove the irrationality of $\zeta(3)$, as illustrate below. Let
\begin{equation}
R_{1,n}^{\left( 1\right) }=\frac{\left( -t\right) _{n}^{2}}{\left(
t+1\right) _{n+1}^{2}}\left( \frac{t+n+1}{t+n+2}\right),
\label{tildeR1}
\end{equation}
and
\begin{equation}
R_{1,n}^{\left( 2\right) }=\frac{\left( -t\right) _{n}^{2}}{\left(
t+1\right) _{n+1}^{2}}\left( \frac{a_{n}t^{2}+b_{n}}{t-n+1}\right) ,
\label{tildeR2}
\end{equation}
where $a_{n}=4n\left(2H_{n}-H_{2n-1}\right)-1$, $b_{n}=(n+1) a_{n}-2n$. These rational functions (based on the approach given in Sections \ref{section2} and \ref{aperyRR}) lead in general to sequences of rationals $\left(p_n^{(i)}\right)_{n\geq0}$ and $\left(q_n^{(i)}\right)_{n\geq0}$, 
$i=1,2$, which verify a second order recurrence relation with the same characteristic polynomial given by Ap\'ery's equation \refe{apery-diff-eq-1}, nonetheless they do not prove the irrationality of $\zeta(3)$. Although the convergence of $\left(p_n^{(i)}/q_n^{(i)}\right)_{n\geq0}$ to the number $\zeta(3)$ is good enough as is depicted in Figure \ref{fig_tildes_R1_R_2}. Their rates of convergence to $\zeta(3)$ are as good as Ap\'ery's approximants.

The principle that makes the modified functions \refe{RationalFi,j}, or equivalently the simultaneous rational approximation problems \refe{eq1ij}-\refe{eq2ij}, effective for proving the irrationality of $\zeta(3)$ is not clear yet. Whit this paper we begin to understand this question. It would be interesting to obtain other rational approximants with different irrationality measures in the context provided here by a constructive simultaneous rational approximation problem. More investigations need to be done in this direction. A well understanding of these phenomena might help in the study of other modifications of rational functions involved in the arithmetic properties of the numbers $\zeta(2k+1)$, $k=2,3,\dots$

Finally, our interest for constructing infinitely many rational approximants to $\zeta(3)$ proving its irrationality is motivated by a more deeper question, namely the transcendence of $\zeta(3)$, but we are yet uncertain about this question.

\begin{figure}[htb!]
\centering%
\includegraphics[scale=.9]{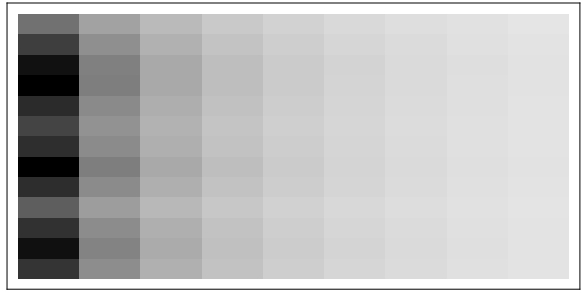}
\caption{From left to right are displayed in grayscale output the values of function \refe{fx} for $n=2,\dots,10$, and from top to bottom its arguments are:
$\pi^{(0,0)}_{n}$, $\pi^{(1,1)}_{n}$ for $\rho=2$, 
$\pi^{(2,1)}_{n}$ for $\rho=4$, 
$\pi^{(3,1)}_{n}$ for $\rho=913$, 
$\pi^{(4,1)}_{n}$ for $\rho=23$, 
$\pi^{(1,2)}_{n}$ for $\vartheta=2$,
$\pi^{(2,2)}_{n}$ for $\vartheta=784$, 
$\pi^{(3,2)}_{n}$ for $\vartheta=93$, 
$\pi^{(4,2)}_{n}$ for $\vartheta=57$, 
$\pi^{(1,3)}_{n}$ for $\upsilon=\chi=\psi=1$, 
$\pi^{(2,3)}_{n}$ for $\upsilon=49$, $\chi=891$, $\psi=97$, 
$\pi^{(3,3)}_{n}$ for $\upsilon=413$, $\chi=732$, $\psi=231$,
$\pi^{(4,3)}_{n}$ for $\upsilon=713$, $\chi=3427$, $\psi=231$. For $n=10$ (last column) is observed a high level of 
coincidence between all Diophantine approximants.}
\label{graph1}
\end{figure}

\begin{table}[ht]\caption{Comparison between Ap\'{e}ry's rational approximants $\pi^{(0,0)}_{n}$ and rational approximants $\pi^{(1,1)}_{n}$ for $\rho =2$.}\label{Apery-P11}
\renewcommand\arraystretch{1.5}
\noindent
\[
\begin{array}{|c|c|c|c|c|}
\hline
n & \pi^{(0,0)}_{n} & \zeta \left( 3\right) -\pi^{(0,0)}_{n} & \pi^{(1,1)}_{n} & \zeta \left( 3\right)-\pi^{(1,1)}_{n} \\ \hline
2 & \frac{351}{292} & 2.109\times 10^{-6} & \frac{1327}{1104} & 
0.00006 \\ \hline
3 & \frac{62531}{52020} & 1.968\times 10^{-9} & \frac{104377}{86832}
& 5.776\times 10^{-8} \\ \hline
4 & \frac{11424695}{9504288} & 1.778\times 10^{-12} & \frac{58624219}{48769920} & 5.211\times 10^{-11} \\ \hline
\vdots & \vdots & \vdots & \vdots & \vdots \\ \hline
50 & \cdot  & 2.795\times 10^{-153} & \cdot  & 9.250\times 10^{-152}
\\ \hline
\end{array}
\]
\end{table}

\begin{table}[ht]
\caption{Comparison between Ap\'{e}ry's rational approximants $\pi^{(0,0)}_{n}$ and rational approximants $\pi^{(1,2)}_{n}$ for $\vartheta =2$.}
\label{Apery-P12}
\renewcommand\arraystretch{1.5}
\noindent\[
\begin{array}{|c|c|c|c|c|}
\hline
n & \pi^{(0,0)}_{n} & \zeta \left( 3\right) -\pi^{(0,0)}_{n} & \pi^{(1,2)}_{n} & \zeta \left( 3\right)-\pi^{(1,2)}_{n} \\ \hline
2 & \frac{351}{292} & 2.109\times 10^{-6} & \frac{1077}{896} & 
0.00004 \\ \hline
3 & \frac{62531}{52020} & 1.968\times 10^{-9} & \frac{1987}{1653} & 
3.686\times 10^{-8} \\ \hline
4 & \frac{11424695}{9504288} & 1.778\times 10^{-12} & \frac{34774333}{28929024} & 3.006\times 10^{-11} \\ \hline
\vdots & \vdots & \vdots & \vdots & \vdots \\ \hline
50 & \cdot & 2.795\times 10^{-153} & \cdot & 3.505\times 10^{-152} \\ \hline
\end{array}
\]
\end{table}

\begin{table}[ht]
\caption{Comparison between Ap\'{e}ry's rational approximants $\pi^{(0,0)}_{n}$ and rational approximants $\pi^{(1,3)}_{n}$ for $\upsilon =\chi =\psi=1$.}\label{Apery-P13}
\renewcommand\arraystretch{1.5}
\noindent\[
\begin{array}[b]{|c|c|c|c|c|}
\hline
n & \pi^{(0,0)}_{n} & \zeta \left( 3\right) -\pi^{(0,0)}_{n} & \pi^{(1,3)}_{n} & \zeta \left( 3\right)
-\pi^{(1,3)}_{n} \\ \hline
2 & \frac{351}{292} & 2.109\times 10^{-6} & \frac{2231}{1856} & 
9.489\times 10^{-6} \\ \hline
3 & \frac{62531}{52020} & 1.968\times 10^{-9} & \frac{783217}{651564}
& 6.216\times 10^{-9} \\ \hline
4 & \frac{11424695}{9504288} & 1.778\times 10^{-12} & \frac{118221931}{98349696} & 4.550\times 10^{-12} \\ \hline
\vdots  & \vdots  & \vdots  & \vdots  & \vdots  \\ \hline
50 & \cdot  & 2.795\times 10^{-153} & \cdot  & 3.114\times 10^{-153}
 \\ \hline
\end{array}
\]
\end{table}

\begin{figure}[htb!]
\centering%
\includegraphics[scale=1.2]{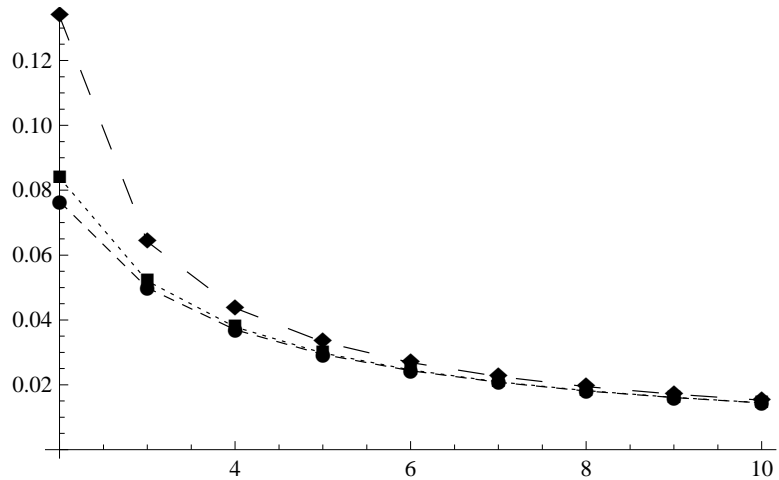}
\caption{Function \refe{fx} is plotted for $n=2,\dots,10$ in the following three situation: 
Symbol $\bullet$ is used for depicting Ap\'ery's approximants, 
while {\tiny$\blacksquare$} and $\blacklozenge$ are used for the approximants 
derived from \refe{tildeR1} and \refe{tildeR2}, respectively.}
\label{fig_tildes_R1_R_2}
\end{figure}

\end{document}